\def\BState{\State\hskip-\ALG@thistlm}
\def\downbar#1{
\setbox10=\hbox{$#1$}
            \dimen10=\ht10 \advance\dimen10 by 2.5pt
            \ifdim \dimen10<15pt 
               \advance\dimen10 by -0.5pt
               \dimen11=\dimen10
               \advance\dimen10 by 2.5pt
               \lower \dimen11
            \else \lower \ht10 \fi
            \hbox {\hskip 1.5pt \vrule height \dimen10 depth \dp10}}
\def\upbar#1{
\setbox10=\hbox{$#1$}
            \dimen10=\ht10 \advance\dimen10 by \dp10 \advance\dimen10 by 2.5pt
            \ifdim \dimen10<15pt 
                \advance\dimen10 by 2pt \fi
            \raise 2.5pt \hbox {\hskip -1.5pt \vrule height \dimen10}}
\newtheorem{definition}{\bf Definition}[section]
\newtheorem{theorem}{\bf Theorem}[section]
\newtheorem{proposition}{\bf Proposition}[section]
\newtheorem{lemma}{\bf Lemma}[section]
\newtheorem{corollary}{\bf Corollary}[section]
\newtheorem{remark}{\bf Remark}[section]
\numberwithin{equation}{section}
\begin{document}
\title[On classical orthogonal polynomials on bi-lattices]{On classical orthogonal polynomials on bi-lattices}

\author{K. Castillo}
\address{University of Coimbra, CMUC, Dep. Mathematics, 3001-501 Coimbra, Portugal}
\email{kenier@mat.uc.pt}
\author{G. Filipuk}
\address{Institute of Mathematics, University of Warsaw, ul.  Banacha 2,02-097, Warsaw, Poland}
\email{filipuk@mimuw.edu.pl }
\author{D. Mbouna}
\address{Institute of Mathematics, University of Warsaw, ul.  Banacha 2,02-097, Warsaw, Poland}
\email{dmbouna@mimuw.edu.pl}

\subjclass[2010]{42C05, 33C45}
\date{\today}
\keywords{Functional equation, regular functional, classical orthogonal polynomials, bi-linear lattice}

\begin{abstract}
In [J. Phys. A: Math. Theor. 45 (2012)], while  looking for spin chains that admit perfect state transfer, Vinet and Zhedanov found an apparently new sequence of orthogonal polynomials, 
that they called para-Krawtchouk polynomials, defined on a bilinear lattice. In this note we present necessary and sufficient conditions for the regularity of solutions of the corresponding functional equation. Moreover, the functional Rodrigues formula and a closed formula for the recurrence coefficients are presented.  As a consequence, we characterize all solutions of the functional equation, including as very particular cases the Meixner, Charlier, Krawtchouk, Hahn, and para-Krawtchouk polynomials.
\end{abstract}
\maketitle

\section{Definitions and mains results}\label{introduction}
Let $\mathcal{P}$ be the vector space of all polynomials with complex coefficients
and let $\mathcal{P}^*$ be its algebraic dual.
$\mathcal{P}_n$ denotes the space of all polynomials with degree less than or equal to $n$.
Define $\mathcal{P}_{-1}:=\{0\}$.
A simple set in $\mathcal{P}$ is a sequence $(P_n)_{n\geq0}$ such that
$P_n\in\mathcal{P}_n\setminus\mathcal{P}_{n-1}$ for each $n$.
A simple set $(P_n)_{n\geq0}$ is called a sequence of orthogonal polynomials (OP)
with respect to ${\bf u}\in\mathcal{P}^*$ if 
$$
\langle{\bf u},P_nP_k\rangle=h_n\delta_{n,k}\quad(n,k=0,1,\ldots;\;h_n\in\mathbb{C}\setminus\{0\}),
$$
where $\langle{\bf u},f\rangle$ is the action of ${\bf u}$ on $f\in\mathcal{P}$. A linear form $\bf u$  is called regular if there exists an OP with respect to it. One of the major questions in the theory of orthogonal polynomials is to identify properties which characterize specific OP.  In this sense the best-known examples are the characterization theorems for the old classical orthogonal polynomials (Hermite, Laguerre, Jacobi, and Bessel); particularly those due to Al-Salam and Chihara, Bochner, Hahn, Maroni, and McCarthy. The simplest way to prove these theorems is by using the algebraic theory developed by Maroni (see \cite{M1991}). Recall that motivated by Hahn's characterization, Geronimus proved the following theorem \cite[Theorem II]{G1940}: 

\vspace{2mm}
\begin{changemargin}{0.8cm}{0.8cm} 
{\em A sequence of polynomials and the sequence of its derivatives are orthogonal with respect to sequences of moments\footnote{
Recall that given a sequence of complex numbers $(u_n)_{n\geq0}$, $(P_n)_{n\geq0}$ is said to be orthogonal with respect to $(u_n)_{n\geq0}$ if it is an OPS with respect to the functional ${\bf u}\in\mathcal{P}^*$ defined by 
$$
\langle {\bf u},f\rangle:=\sum_{k=0}^na_{k}u_k,\quad f(z)=\sum_{k=0}^na_{k}z^k\in\mathcal{P}.
$$}
 $(u_n)_{n\geq0}$ and $(v_n)_{n\geq0}$, respectively, if and only if 
\begin{equation}\label{A1}
(na+d)u_{n+1}+(nb+e)u_n+ncu_{n-1}=0 
\end{equation}
for each $n$, where $a,b,c,d,e$ are complex numbers such that
$na+d\neq0$ and $\det\Big(u_{i+j}\Big)_{i,j=0}^n\neq0$, and} 
\begin{equation}\label{A3}
v_n=au_{n+2}+bu_{n+1}+cu_{n}.
\end{equation}
\end{changemargin}
\vspace{2mm}
Maroni, as Rota in his modern umbral calculus, observed that Geronimus' theorem is clearer if we write the moments of OP, $(u_n)_{n\geq 0}$, as defined by linear function functional ${\bf u}$ in $\mathcal{P}$: $u_n=\langle{\bf u},x^n\rangle$. The description of OP is then condense into the properties of the linear functional and, in particular, this allows us to rewrite the difference equation \eqref{A1} as a functional equation 
\begin{equation}\label{GPa}
{\bf D}(\phi{\bf u})=\psi{\bf u},
\end{equation}
where $\phi(z)=az^2+bz+c$ and $\psi(z)=dz+e$, and 
\eqref{A3} as ${\bf v}=\phi{\bf u}$, being ${\bf u}$ and ${\bf v}$ the linear functionals on $\mathcal{P}$ given by 
$\langle{\bf u},z^n\rangle=u_n$ and $\langle{\bf v},z^n\rangle=v_n$. Each regular solution of \eqref{GPa} is called a classical linear form. When we start from \eqref{GPa} there are, in terms of $\phi$ and $\psi$, two fundamental questions to answer: 
\vspace{2mm}
\begin{changemargin}{0.8cm}{0.8cm} 
{\em 
(1) Is ${\bf u}$ regular? (2) If yes, what are OP families associated with ${\bf u}$? }\end{changemargin}
\vspace{2mm}
In the case of \eqref{GPa} these issues are solved in relatively easily way, see \cite{CP23} for a recent survey on the subject.

Of course, \eqref{GPa} does not include families as important as the Askey-Wilson or Racah polynomials and this led in \cite{KCDMJP2022} to define and study classical OP from the corresponding distributional equation on lattices $x(s)$ given by
\begin{equation}
\label{xs-def}
x(s):=\left\{
\begin{array}{ccl}
\mathfrak{c}_1 q^{-s} +\mathfrak{c}_2 q^s +\mathfrak{c}_3 & {\rm if} &  q\neq1,\\ [7pt]
\mathfrak{c}_4 s^2 + \mathfrak{c}_5 s +\mathfrak{c}_6 & {\rm if} &  q =1
\end{array}
\right.
\end{equation}
($s\in\mathbb{C}$), where $q>0$ (fixed) and $\mathfrak{c}_j$ ($1\leq j\leq6$) are (complex) constants,
that may depend on $q$, such that $(\mathfrak{c}_1,\mathfrak{c}_2)\neq(0,0)$ if $q\neq1$,
and $(\mathfrak{c}_4,\mathfrak{c}_5,\mathfrak{c}_6)\neq(0,0,0)$ if $q=1$. For $x(s)=\mathfrak{c}_6$ we recover \eqref{GPa}. In addition,  \cite{LVAZ2012}, while  looking for spin chains that admit perfect state transfer, Vinet and Zhedanov found an apparently new OP, 
the so-called para-Krawtchouk polynomials, defined on the bi-lattice
\begin{align}\label{original}
x(s)=s+\frac12(\mu-1) (1+(-1)^s)\quad (s=0,1,\dots, N),
\end{align}
where $N$ is odd and the $\mu \in (0,2)$. Examples of semi-classical OP on bi-lattices can be find in  \cite{CSWV2012}. However, Vinet and Zhedanov pointed out that the para-Krawtchouk polynomials are ``classical" OP. Before giving a rigorous definition in the distributional sense of this last statement, let us introduce some necessary definitions.
\begin{definition}\label{def-DxSxNUL} 
We call the complex valued function $x$ given by
\begin{align}
x(s)=s+\gamma(1+(-1)^s)\quad (s, \gamma \in \mathbb{C})
\end{align}
a bi-lattice.
\end{definition}
\begin{definition}
Define two operator on $\mathcal{P}$, $\mathrm{D}_s$ and $\mathrm{S}_s$, as follows:
\begin{align*}
\mathrm{D}_s f(x(s))&= \frac{f(x(s+1))-f(x(s-1))}{2}, \\[7pt]
\mathrm{S}_s f(x(s))&= \frac{f(x(s+1)) + f(x(s-1))}{2}.
\end{align*}
\end{definition}

\begin{definition} 
For each ${\bf u}\in\mathcal{P}^*$, the functionals $\mathbf{D}_s{\bf u}\in\mathcal{P}^*$
and $\mathbf{S}_s{\bf u}\in\mathcal{P}^*$ are defined by
\begin{align*}
\langle \mathbf{D}_s{\bf u},f\rangle:=-\langle {\bf u},\mathrm{D}_s f\rangle,\quad
\langle \mathbf{S}_s{\bf u},f\rangle:=\langle {\bf u},\mathrm{S}_s f\rangle\quad (f\in\mathcal{P}).
\end{align*}
\end{definition}

Let us now define what we understand here by classical orthogonal polynomials on a bi-lattice.

\begin{definition}\label{NUL-def}
${\bf u}\in\mathcal{P}^*$ is called $bi$-classical if it is regular and there exist nonzero polynomials
$\phi\in\mathcal{P}_2$ and $\psi\in\mathcal{P}_1$
such that
\begin{equation}\label{NUL-Pearson}
\mathbf{D}_s(\phi{\bf u})=\mathbf{S}_s(\psi{\bf u}).
\end{equation}
An OP with respect to {\bf u} will be called a classical OP on a bi-lattice.
\end{definition}

The following theorem answers in this context to question (1) posed above.

\begin{theorem}\label{main-Thm1}
Consider the bi-lattice
$$
x(s)=s+\gamma (1+(-1)^s) \quad (s, \gamma \in \mathbb{C}). 
$$
Let ${\bf u}\in\mathcal{P}^*\setminus\{{\bf 0}\}$ and suppose that there exist $(\phi,\psi)\in\mathcal{P}_2\times\mathcal{P}_1\setminus\{(0,0)\}$ 
such that 
\begin{equation}\label{NUL-PearsonMainThm1}
\mathbf{D}_{s}(\phi{\bf u})=\mathbf{S}_{s}(\psi{\bf u}).
\end{equation}
Set $\phi(z)=az^2+bz+c$ and $\psi(z)=dz+e$ ($a,b,c,d,e \in \mathbb{C}$). The functional ${\bf u}$ is regular  if and only if the following conditions hold:
\begin{equation}\label{le1a}
d_n\neq0,\quad \phi \left(-\frac{e_n}{d_{2n}}\right)+n d_n\neq0,
\end{equation}
where $d_n=an+d$, $e_n=bn+e$.
Under such conditions, the monic OPS $(P_n)_{n\geq 0}$ with respect to ${\bf u}$ satisfies 
\begin{equation}\label{ttrr-Dx}
P_{n+1}(z)=(z-B_n)P_n(z)-C_nP_{n-1}(z),
\end{equation}
with $P_{-1}(z)=0$, where 
\begin{align}
B_n & =\frac{n e_{n-1}}{d_{2n-2}}-\frac{(n+1)e_n}{d_{2n}},\label{Bn-Dx} \\[7pt]
C_{n+1} & =-\frac{(n+1)d_{n-1}}{d_{2n-1}d_{2n+1}}\left(\phi\left( -\frac{e_{n}}{d_{2n}}\right)+nd_n\right).\label{Cn-Dx}
\end{align}
Moreover, the following functional Rodrigues formula holds:
\begin{align}\label{RodThemMain}
P_n {\bf u} = k_n\mathbf{D}_s^n {\bf u}^{[n]}, \quad
k_n := (-1)^{n} \prod_{j=1} ^n d_{n+j-2} ^{-1}.
\end{align}
\end{theorem}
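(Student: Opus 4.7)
The approach is the classical Maroni--Geronimus pattern: use the distributional equation \eqref{NUL-PearsonMainThm1} to force explicit formulas for the three-term recurrence coefficients of any monic OPS associated with ${\bf u}$, and then invoke Favard's theorem to convert the nonvanishing conditions appearing in those formulas into a regularity criterion. First I would establish Leibniz-type product rules for $\mathbf{D}_s$ and $\mathbf{S}_s$ acting on functionals (these should be recorded earlier in the paper), which in particular give clean expansions of $\mathbf{D}_s(f{\bf u})$ and $\mathbf{S}_s(f{\bf u})$ for $f\in\mathcal{P}$. Combined with orthogonality $\langle {\bf u}, P_nP_k\rangle = h_n\delta_{nk}$ under the working assumption that ${\bf u}$ is regular, testing \eqref{NUL-PearsonMainThm1} against $P_n$ and $zP_n$ yields a small linear system whose solution expresses $B_n$ and $C_{n+1}$ in terms of inner products $\langle {\bf u}, z^kP_n\rangle$; using the Pearson equation a second time to evaluate these produces the closed formulas \eqref{Bn-Dx}--\eqref{Cn-Dx}.

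For the converse direction, the formulas \eqref{Bn-Dx}--\eqref{Cn-Dx} are well defined precisely when $d_n\neq0$ and $\phi(-e_n/d_{2n})+nd_n\neq0$ for all relevant $n$; moreover, under these conditions one has $C_{n+1}\neq 0$, so by Favard's theorem the sequence $(P_n)$ defined by \eqref{ttrr-Dx} is the monic OPS of some regular linear functional ${\bf v}$. Pairing $(P_n)$ with the scalar moment recurrence deduced from \eqref{NUL-PearsonMainThm1} (obtained by testing against $z^n$, which produces a three-term recurrence for $(u_n)$ with leading coefficient $d_n$) shows that ${\bf v}$ must coincide with ${\bf u}$ up to the normalisation $u_0$. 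This gives both the necessity and the sufficiency of \eqref{le1a} for regularity.

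For the functional Rodrigues formula \eqref{RodThemMain}, I would introduce shifted functionals iteratively by ${\bf u}^{[0]} := {\bf u}$ and ${\bf u}^{[n+1]} := \phi^{[n]} {\bf u}^{[n]}$, where $\phi^{[n]}$ is the degree-two polynomial obtained by translating $\phi$ along the bi-lattice by $n$ units. An induction shows that each ${\bf u}^{[n]}$ satisfies a Pearson equation of the same shape with an explicit pair $(\phi^{[n]}, \psi^{[n]})$ computable from $(a,b,c,d,e)$. The identity $P_n{\bf u} = k_n\mathbf{D}_s^n{\bf u}^{[n]}$ is then proved by induction on $n$: the base case is trivial with $k_0=1$; the step rewrites $\mathbf{D}_s^{n+1}{\bf u}^{[n+1]} = \mathbf{D}_s(\phi^{[n]}\,\mathbf{D}_s^n{\bf u}^{[n]})$ and applies the Pearson equation at level $n$ together with \eqref{ttrr-Dx}, peeling off a factor $-1/d_{n+j-2}$ at each step to assemble the constant $k_n = (-1)^n \prod_{j=1}^n d_{n+j-2}^{-1}$.

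The main obstacle will be the product-rule bookkeeping on a bi-lattice. Because $\mathrm{D}_s$ and $\mathrm{S}_s$ do not satisfy the simple Leibniz rule enjoyed by the classical derivative, the correct identities mix $\mathbf{D}_s$, $\mathbf{S}_s$, and lattice-shifted polynomials within a single formula, and verifying that the resulting expressions for $B_n$, $C_{n+1}$, and for the Rodrigues shift $\phi^{[n]}\mapsto\phi^{[n+1]}$ collapse to exactly the forms stated in the theorem requires delicate simplification exploiting the specific shape $x(s)=s+\gamma(1+(-1)^s)$. Once those product rules are in hand, however, the Favard argument for \eqref{le1a} and the induction for \eqref{RodThemMain} are essentially forced.
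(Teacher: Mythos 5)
Your overall architecture (Pearson equation $\Rightarrow$ explicit recurrence coefficients $\Rightarrow$ Favard, plus an inductive Rodrigues formula) matches the paper's, but two of your key steps do not survive contact with the bi-lattice. First, your definition ${\bf u}^{[n+1]}:=\phi^{[n]}{\bf u}^{[n]}$ is the classical (continuous) hierarchy and is not the right object here: the theorem's ${\bf u}^{[n]}$ is defined by \eqref{uk-func-Dx}, namely ${\bf u}^{[k+1]}=\mathbf{D}_s(\psi^{[k]}{\bf u}^{[k]})-\mathbf{S}_s(\phi^{[k]}{\bf u}^{[k]})$, and what one actually gets (Lemma \ref{lemmaA}) is $\mathbf{S}_s{\bf u}^{[k+1]}=-\phi^{[k]}{\bf u}^{[k]}$ and $\mathbf{D}_s{\bf u}^{[k+1]}=-\psi^{[k]}{\bf u}^{[k]}$, which is not equivalent to ${\bf u}^{[k+1]}=\phi^{[k]}{\bf u}^{[k]}$ since $\mathbf{S}_s$ is not invertible in any naive sense. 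Consequently your induction step $\mathbf{D}_s^{n+1}{\bf u}^{[n+1]}=\mathbf{D}_s(\phi^{[n]}\mathbf{D}_s^n{\bf u}^{[n]})$ is false: multiplication by a polynomial does not commute with $\mathbf{D}_s^n$, and the correct expansion (the Leibniz formula \eqref{leibnizfor-NUL}) produces extra terms $\mathbf{D}_s^{n-k}\mathbf{S}_s^k{\bf u}^{[n]}$ that must be eliminated by a genuinely three-step argument (the paper's equations \eqref{1a-lattice}, \eqref{1b-lattice}, \eqref{1c-lattice}) before the recurrence \eqref{Rn-Prop1} emerges. You flag the ``bookkeeping'' as an obstacle, but the issue is not bookkeeping: with your hierarchy the identity \eqref{RodThemMain} you are trying to prove is a different (and incorrect) statement.

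Second, your converse is under-justified. Favard gives you a regular ${\bf v}$ with OPS $(P_n)_{n\ge0}$, and the moment relation obtained by testing \eqref{NUL-PearsonMainThm1} against $z^n$ (which, note, is a full-history recurrence \eqref{momentseq} on a bi-lattice, not a three-term one) determines the moments of ${\bf u}$ from $u_0$. But to conclude ${\bf u}\propto{\bf v}$ you would need to know that the moments of ${\bf v}$ satisfy that same recurrence, i.e.\ that ${\bf v}$ itself satisfies the Pearson equation --- which is precisely what is not yet known. The paper closes this gap differently: it uses \eqref{momentseq} only to show $u_0\neq0$, and then uses the already-established Rodrigues formula \eqref{roformula} to compute $\langle{\bf u},P_n\rangle=(-1)^nk_n\langle{\bf u}^{[n]},\mathrm{D}_s^n1\rangle=0$ for $n\ge1$, which pins ${\bf u}$ to a multiple of ${\bf v}$. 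Relatedly, in the necessity direction the paper does not extract $B_n,C_{n+1}$ by testing against $P_n$ and $zP_n$; it computes only $C_1^{[n]}$ for each derived functional ${\bf u}^{[n]}$ (regular by Lemma \ref{x-admissible}, and satisfying a Pearson equation with the explicit pair $(\phi^{[n]},\psi^{[n]})$), which yields the second condition in \eqref{le1a} with far less computation. You would need to supply the missing Rodrigues machinery and the $\langle{\bf u},P_n\rangle=0$ step before your outline becomes a proof.
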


Let $\big(H_n ^{\gamma}(\cdot;a,b)\big)_{n\geq 0}$ be a sequence of OP satisfying \eqref{ttrr-Dx} with
\begin{align}\label{def-newOPS1}
B_n=-2n\sqrt{a+1},\quad C_{n+1}=(an+b)(n+1),
\end{align}
with $a,b \in \mathbb{C}$ such that $an+b\neq 0$. Let $\big(Q_n ^{\gamma}(\cdot;a,b,c)\big)_{n\geq 0}$ be a sequence of OP satisfying \eqref{ttrr-Dx} with
\begin{align}\label{def-newOPS2}
B_n&= \frac{(a-1)bc}{(n+a)(n+a-1)},\\[7pt]
C_{n+1}&=-\frac{(n+1)(n+a+b)(n+a-b)(n+a+c)(n+a-c)(n+2a-1)}{(2n+2a-1)(n+a)^2(2n+2a+1)},
\end{align}
with $a,b,c \in \mathbb{C}$ such that
$$-2a,-a-b,-a+b,-a-c,-a+c \notin \mathbb{N}_0.$$

The following theorem answers in this context to questions (2) posed above.

\begin{theorem}\label{main-Thm2}
Up to an affine transformation of the variable, the only classical OP on the bi-lattice \eqref{xs-def} are $\big(Q_n ^{\gamma}(\cdot;a,b,c)\big)_{n\geq 0}$ and $\big(H_n ^{\gamma}(\cdot;a,b)\big)_{n\geq 0}$.
\end{theorem}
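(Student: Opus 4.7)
My plan is to start from the explicit formulas for the recurrence coefficients $B_n$ and $C_{n+1}$ furnished by Theorem \ref{main-Thm1}, and then use an affine change of variable $z \mapsto \alpha z + \beta$ to normalize the data $(\phi,\psi)$. Splitting into the two cases $a=0$ and $a\neq 0$ in $\phi(z)=az^{2}+bz+c$, I will match the normalized $(B_n,C_{n+1})$ against the two templates \eqref{def-newOPS1} and \eqref{def-newOPS2}, respectively.

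In the case $a=0$, using $d_n=d$ and $e_n=bn+e$ in \eqref{Bn-Dx}--\eqref{Cn-Dx} gives, after a short algebraic simplification,
\begin{align*}
B_n = -\frac{2bn+e}{d},\qquad C_{n+1} = \frac{(n+1)\bigl((b^{2}-d^{2})n+be-cd\bigr)}{d^{2}}.
\end{align*}
The shift $z\mapsto z+e/d$ removes the constant of $B_n$, yielding $B_n=-2n(b/d)$ and $C_{n+1}=(n+1)(a_{H}n+b_{H})$ with $a_{H}=(b/d)^{2}-1$ and $b_{H}=(be-cd)/d^{2}$; these coincide with \eqref{def-newOPS1}, identifying the OP as $H_n^{\gamma}(\,\cdot\,;a_{H},b_{H})$. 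The subcase $b=0$ (constant $\phi$) is covered by $a_{H}=-1$.

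In the case $a\neq 0$ the decisive identity is
\begin{align*}
d_{2n}^{\,2}\!\left(\phi\!\left(-\frac{e_n}{d_{2n}}\right)+n\,d_n\right) = a\,e_n^{\,2} - b\,e_n d_{2n} + c\,d_{2n}^{\,2} + n\,d_n d_{2n}^{\,2},
\end{align*}
which is a quartic polynomial in $n$. A direct expansion shows that this quartic is invariant under $n\mapsto -d/a-n$, equivalently, it is an even function of $m:=n+d/(2a)$. Hence it has the form $4a^{3}(m^{2}-b_{Q}^{\,2})(m^{2}-c_{Q}^{\,2})$, which factors as
$$
4a^{3}(n+a_{Q}+b_{Q})(n+a_{Q}-b_{Q})(n+a_{Q}+c_{Q})(n+a_{Q}-c_{Q})
$$
with $a_{Q}:=d/(2a)$. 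Combining this with the identifications $d_{n-1}=a(n+2a_{Q}-1)$, $d_{2n\pm 1}=a(2n+2a_{Q}\pm 1)$, and $d_{2n}=2a(n+a_{Q})$, the formula \eqref{Cn-Dx} reproduces the $C_{n+1}$ of \eqref{def-newOPS2}. A parallel short calculation on \eqref{Bn-Dx}, after the translation that sends $b$ to $0$, produces the matching $B_n$ of \eqref{def-newOPS2}.

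The main obstacle is verifying the evenness in $m$ of the quartic in $C_{n+1}$, since this symmetry is what generates the characteristic factorization $(n+a\pm b)(n+a\pm c)$ of the $Q_n^{\gamma}$ family; once this symmetry and the degree matching are secured, the remaining identifications are routine. The regularity conditions \eqref{le1a} ensure non-vanishing of all involved denominators and rule out the degenerations that would otherwise escape both templates, so the list $\bigl(H_n^{\gamma}\bigr)$, $\bigl(Q_n^{\gamma}\bigr)$ is exhaustive.
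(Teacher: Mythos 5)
Your proposal is correct and follows essentially the same route as the paper: compute $B_n$ and $C_{n+1}$ from Theorem \ref{main-Thm1}, split according to $\deg\phi$ (you merge the paper's constant and linear subcases into the single case $a=0$), and match the resulting coefficients against the templates \eqref{def-newOPS1} and \eqref{def-newOPS2} after a translation of the variable. The only notable difference is in the key factorization of the quartic appearing in $C_{n+1}$: where the paper writes out the four roots $\alpha_1,\dots,\alpha_4$ explicitly and then verifies the relations $\alpha_3=1-\alpha_2$, $\alpha_4=(1-\alpha_1)/a$, you deduce the $(n+a_Q\pm b_Q)(n+a_Q\pm c_Q)$ structure directly from the evenness of the quartic in $m=n+d/(2a)$ (which I checked does hold), a cleaner way of seeing why the $Q_n^{\gamma}$ pattern must emerge; the only detail you gloss over is the choice of signs of $b_Q,c_Q$ needed to make the product $b_Qc_Q$ in $B_n$ come out right, which is harmless since $Q_n^{\gamma}(\cdot;a,b,c)=Q_n^{\gamma}(\cdot;a,-c,-b)$.
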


It is worth pointing out that  the classical families for the linear lattice, Meixner, Charlier, Krawtchouk, and Hahn polynomials, are included in Theorem \ref{main-Thm2}, as well as the para-Krawtchouk polynomials.

\subsection*{Meixner polynomials}
The monic Meixner polynomials, $(M_n(\cdot;\beta,c))_{n\geq 0}$, satisfies (see \cite[p.234]{KLS2010}) \eqref{ttrr-Dx} with 
\begin{align*}
B_n =\frac{n+(n+\beta)c}{1-c},\quad C_{n+1}=\frac{(n+1)(n+\beta)}{(1-c)^2},
\end{align*}
{with $\beta >0$ and $0<c<1$. }It is easy to check that
{
\begin{align*}
&2^n M_n\left(\frac{1}{2}\sqrt{\frac{c+3}{c-1}}z+ \frac{\beta c}{1-c},\beta,c\right)\\[7pt]
&=\Big( \frac{c+3}{c-1} \Big)^{n/2} H_n ^{\gamma} \left(z;\frac{4}{(c+3)(c-1)},\frac{4\beta}{(c+3)(c-1)}  \right).
\end{align*}
}
\subsection*{Charlier polynomials}
The monic Charlier polynomials, $(C_n(\cdot;a))_{n\geq 0}$, satisfies (see \cite[p.247]{KLS2010}) \eqref{ttrr-Dx} with 
\begin{align*}
B_n =n+a,\quad C_{n+1}=a(n+1),
\end{align*}
{with $a>0$.} It is easy to check that
{
\begin{align*}
C_n(z,a)=(-2)^{-n} H_n ^{\gamma} \left(-2(z-a);0,4a \right).
\end{align*}
}

\subsection*{Krawtchouk polynomials}
The monic Krawtchouk polynomials, $(K_n(\cdot;p,N))_{n\geq 0}$, satisfies (see \cite[p.237]{KLS2010}) \eqref{ttrr-Dx} with 
\begin{align*}
B_n =n(1-p)+p(N-n),\quad C_{n+1}=(p-1)(n+1)(n-N),
\end{align*}
{with $N\in \mathbb{N}$ and $0<p<1$.} It is easy to check that
{
\begin{align*}
&K_n(z,p,N)\\
&=\frac{(4(p-1)^2+1)^{n/2}}{(-2)^n} H_n ^{\gamma} \left(\frac{-2}{\sqrt{4(p-1)^2+1}}\left(z-pN\right);\frac{4(p-1)}{4(p-1)^2+1},\frac{4(1-p)N}{4(p-1)^2+1}  \right).
\end{align*}
}
\subsection*{Hahn polynomials}
The monic Hahn polynomials, $(L_n(\cdot;\alpha,\beta,N))_{n\geq 0}$, satisfies (see \cite[p.204]{KLS2010}) \eqref{ttrr-Dx} with
\begin{align*}
B_n&=\frac{(n+\alpha +1)(N-n)(n+\alpha +\beta +1)}{(2n+\alpha +\beta +1)(2n+\alpha+\beta +2)} +\frac{n(n+\beta)(n+\alpha +\beta+N +1)}{(2n+\alpha +\beta )(2n+\alpha+\beta +1)},\\[7pt]
C_{n+1}&=\frac{(n+1){ (N-n)}(n+\alpha +\beta+1)(n+\alpha+1)(n+\beta+1)(n+\alpha+\beta+N+2)}{(2n+\alpha +\beta +1)(2n+\alpha +\beta +2)^2(2n+\alpha +\beta { +3})},
\end{align*}
{ with $-1<\alpha<-N$, $-1<\beta<-N$ and $N\in \mathbb{N}$}. It is easy to check that
{
\begin{align*}
&L_n(z,\alpha,\beta,N)\\[7pt]
&\quad =2^{-n} Q_n ^{\gamma} \left(2z+\frac{1}{2}(\alpha -\beta)-N;\frac{1}{2}(\alpha +\beta +2),\frac{1}{2}(\alpha -\beta),\frac{1}{2}(\alpha+\beta +2N+2)  \right).
\end{align*}
}
\subsection*{Para-Krawtchouk polynomials}
The monic para-Krawtchouk polynomials, $(P_n(\cdot;\mu,N))_{n\geq 0}$, satisfies (see \cite[(3.11)]{LVAZ2012}) \eqref{ttrr-Dx} with 
\begin{align*}
B_n={ \frac{N+\mu -1}{2}}, \quad C_{n+1}=-\frac{(n+1)(n-N)(2n+1-N-\mu)(2n+1-N+\mu)}{4(2n-N)(2n-N+2)},
\end{align*}
{ with $0<\mu <2$ and $N\in \mathbb{N}$}. It is easy to check that
\begin{align*}
P_n(z;\mu,N)={ Q_n^{\gamma}\left(z-\frac{N+\mu -1}{2};(1-N)/2, \mu /2,0  \right)}.
\end{align*}

In Section \ref{pre} we present some preliminary results. In Section \ref{th1} and Section \ref{th2} we prove Theorem \ref{main-Thm1} and Theorem \ref{main-Thm2}, respectively.

\section{Preliminary results}\label{pre}
We summarize some results obtained in \cite{KCDMJP2022}. When something new is added, we present a proof of the result.

\begin{lemma}\label{propertyDxSx}
Let $f,g\in\mathcal{P}$. Then the following properties hold:\rm
\begin{align}
 &\mathrm{D}_s \big(fg\big)= \big(\mathrm{D}_s f\big)\big(\mathrm{S}_s g\big)+\big(\mathrm{S}_s f\big)\big(\mathrm{D}_s g\big), \label{def-Dxfg} \\[7pt]
 &\mathrm{S}_s \big( fg\big)= \big(\mathrm{D}_s f\big) \big(\mathrm{D}_s g\big) +\big(\mathrm{S}_s f\big) \big(\mathrm{S}_s g\big), \label{def-Sxfg} \\[7pt]
&\mathrm{S}_s\mathrm{D}_s f= \mathrm{D}_s\mathrm{S}_s f, \label{def-DxSxf} \\[7pt]
&\mathrm{S}_s^2 f =  \mathrm{D}_s^2 f +f, \label{def-Sx2f} \\[7pt]
&f\mathrm{S}_sg =\mathrm{S}_s\big(g\mathrm{S}_sf \big)-\mathrm{D}_s\big(g\mathrm{D}_sf\big), \label{def-fSxg} \\[7pt]
&f\mathrm{D}_s g =\mathrm{D}_s\big(g\mathrm{S}_sf\big)-\mathrm{S}_s\big(g\mathrm{D}_sf\big), \label{def-fDxg}
\end{align}
\end{lemma}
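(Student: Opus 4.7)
The plan is to introduce the commuting shift operators $E_\pm$ defined by $E_\pm f(x(s)) := f(x(s\pm 1))$, so that $\mathrm{D}_s = \tfrac12(E_+ - E_-)$ and $\mathrm{S}_s = \tfrac12(E_+ + E_-)$. Because $E_\pm$ are multiplicative on polynomial values, $E_\pm(fg) = (E_\pm f)(E_\pm g)$, and because $E_+ E_- = E_- E_+ = I$, every identity in the statement reduces to routine algebra with this commuting pair of shifts.

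For \eqref{def-Dxfg} and \eqref{def-Sxfg} I would expand both sides in terms of $E_\pm$: the left-hand sides become $\tfrac12\bigl((E_+f)(E_+g) \mp (E_-f)(E_-g)\bigr)$ by multiplicativity of $E_\pm$, while expanding $\mathrm{D}_s, \mathrm{S}_s$ in the right-hand sides and multiplying out the four cross products shows the expressions collapse to the same thing. For \eqref{def-DxSxf} the identity is immediate from $\mathrm{D}_s\mathrm{S}_s = \tfrac14(E_+^2 - E_-^2) = \mathrm{S}_s\mathrm{D}_s$. For \eqref{def-Sx2f}, expanding the squares yields $\mathrm{S}_s^2 - \mathrm{D}_s^2 = E_+ E_- = I$.

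For \eqref{def-fSxg} and \eqref{def-fDxg} the plan is not to compute from scratch but to bootstrap from the previous four identities. For \eqref{def-fDxg}, I would apply \eqref{def-Dxfg} to $g\,\mathrm{S}_s f$ and \eqref{def-Sxfg} to $g\,\mathrm{D}_s f$ and subtract:
\begin{align*}
\mathrm{D}_s(g\,\mathrm{S}_s f) - \mathrm{S}_s(g\,\mathrm{D}_s f)
&= (\mathrm{S}_s g)\bigl[\mathrm{D}_s\mathrm{S}_s f - \mathrm{S}_s\mathrm{D}_s f\bigr] \\
&\quad + (\mathrm{D}_s g)\bigl[\mathrm{S}_s^2 f - \mathrm{D}_s^2 f\bigr].
\end{align*}
By \eqref{def-DxSxf} the first bracket vanishes and by \eqref{def-Sx2f} the second equals $f$, giving $f\,\mathrm{D}_s g$. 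The derivation of \eqref{def-fSxg} is symmetric: apply \eqref{def-Sxfg} to $g\,\mathrm{S}_s f$ and \eqref{def-Dxfg} to $g\,\mathrm{D}_s f$, then use \eqref{def-DxSxf} and \eqref{def-Sx2f} to simplify.

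I do not anticipate a serious obstacle: all six identities are formal consequences of the algebra of two commuting shifts. The only implicit prerequisite, namely that the iterated operators $\mathrm{D}_s^2$, $\mathrm{S}_s^2$ and $\mathrm{D}_s\mathrm{S}_s$ act meaningfully on $\mathcal{P}$, is the statement that $\mathrm{D}_s$ and $\mathrm{S}_s$ send polynomials to polynomials; this has already been established in \cite{KCDMJP2022} for the lattices in \eqref{xs-def}, and an identical argument covers the bi-lattice $x(s) = s + \gamma(1+(-1)^s)$ introduced in Definition \ref{def-DxSxNUL}.
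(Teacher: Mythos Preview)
Your argument via the shift operators $E_\pm$ is correct and is the standard way to verify these identities; all six relations are indeed formal consequences of $E_+E_-=I$ and the multiplicativity of $E_\pm$. Note, however, that the paper does not actually prove this lemma: Section~\ref{pre} opens by stating that these results are summarized from \cite{KCDMJP2022}, and no proof is supplied for Lemma~\ref{propertyDxSx}. So there is no ``paper's proof'' to compare against here.

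One small inaccuracy in your final paragraph: for the bi-lattice of Definition~\ref{def-DxSxNUL} it is \emph{not} true that $\mathrm{D}_s$ and $\mathrm{S}_s$ send $\mathcal{P}$ to $\mathcal{P}$. Proposition~2.2 of the paper (equations \eqref{Dx-xn}--\eqref{Sx-xn}) shows explicitly that $\mathrm{D}_s z^n$ and $\mathrm{S}_s z^n$ pick up coefficients involving $(-1)^s$, so the images are polynomials in $z$ and $(-1)^s$ rather than in $z$ alone. This does not damage your proof, since the identities \eqref{def-Dxfg}--\eqref{def-fDxg} are pointwise identities in $s$ and your shift-operator calculus works at the level of functions of $s$; but the justification you give for iterating the operators should be phrased in those terms rather than as ``$\mathrm{D}_s,\mathrm{S}_s$ preserve $\mathcal{P}$''.
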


\begin{lemma}
Let $f,g \in \mathcal{P}$ and ${\bf u}\in \mathcal{P}^*$. Then the following properties hold.
\begin{align}
&\mathbf{D}_s(f{\bf u}) =\mathrm{S}_s f \mathbf{D}_s {\bf u} +\mathrm{D}_s f \mathbf{S}_s {\bf u}, \label{def-Dxfu} \\[7pt]
&\mathbf{S}_s (f {\bf u})
=\mathrm{S}_s f \mathbf{S}_s{\bf u}+\mathrm{D}_sf {\bf D}_s {\bf u}, \label{def-Sxfu} \\[7pt]
&f\mathbf{D}_s  {\bf u} 
={\bf D}_s\big( \mathrm{S}_sf {\bf u} \big) -{\bf S}_s\big( \mathrm{D}_s f {\bf u}\big),  \label{def-fDxu} \\[7pt]
&f\mathbf{S}_s  {\bf u} 
={\bf S}_s\big( \mathrm{S}_sf {\bf u} \big) -{\bf D}_s\big( \mathrm{D}_s f {\bf u}\big),  \label{def-fSxu} \\[7pt]
& {\bf D}_s ^2{\bf u}={\bf S}_s ^2 {\bf u}-{\bf u}.\label{def-Dx2u-Sx2u}
\end{align}
\end{lemma}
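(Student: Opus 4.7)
The plan is to prove each of the five functional identities by dualizing the corresponding polynomial identity from Lemma \ref{propertyDxSx}. Recall that the product $f\mathbf{u}$ is defined by $\langle f\mathbf{u},g\rangle:=\langle \mathbf{u},fg\rangle$ for every $g\in\mathcal{P}$, and the adjoint definitions give $\langle \mathbf{D}_s\mathbf{u},g\rangle=-\langle \mathbf{u},\mathrm{D}_s g\rangle$ and $\langle \mathbf{S}_s\mathbf{u},g\rangle=\langle \mathbf{u},\mathrm{S}_s g\rangle$. The strategy throughout is to test both sides of each identity against an arbitrary $g\in\mathcal{P}$ and reduce to an identity already available in Lemma \ref{propertyDxSx}.

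For \eqref{def-Dxfu}, I would expand $\langle \mathbf{D}_s(f\mathbf{u}),g\rangle=-\langle \mathbf{u},f\,\mathrm{D}_s g\rangle$ and invoke \eqref{def-fDxg} to replace $f\,\mathrm{D}_s g$ by $\mathrm{D}_s(g\,\mathrm{S}_s f)-\mathrm{S}_s(g\,\mathrm{D}_s f)$; reapplying the adjoint definitions to each term on the right yields exactly $\langle \mathrm{S}_s f\,\mathbf{D}_s\mathbf{u}+\mathrm{D}_s f\,\mathbf{S}_s\mathbf{u},g\rangle$. Identity \eqref{def-Sxfu} is obtained in the same way, starting from $\langle \mathbf{S}_s(f\mathbf{u}),g\rangle=\langle \mathbf{u},f\,\mathrm{S}_s g\rangle$ and using \eqref{def-fSxg}. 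Identity \eqref{def-Dx2u-Sx2u} follows at once from two applications of the adjoint definitions: $\langle (\mathbf{S}_s^2-\mathbf{D}_s^2)\mathbf{u},g\rangle=\langle \mathbf{u},\mathrm{S}_s^2 g-\mathrm{D}_s^2 g\rangle=\langle \mathbf{u},g\rangle$, where the last equality uses \eqref{def-Sx2f}.

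With \eqref{def-Dxfu} and \eqref{def-Sxfu} in hand, the remaining identities \eqref{def-fDxu} and \eqref{def-fSxu} follow by purely algebraic manipulation at the functional level. For \eqref{def-fDxu}, I would apply \eqref{def-Dxfu} with $f$ replaced by $\mathrm{S}_s f$ and \eqref{def-Sxfu} with $f$ replaced by $\mathrm{D}_s f$, and then subtract the two resulting equalities. The coefficient of $\mathbf{S}_s\mathbf{u}$ cancels thanks to $\mathrm{D}_s\mathrm{S}_s f=\mathrm{S}_s\mathrm{D}_s f$ from \eqref{def-DxSxf}, and the coefficient of $\mathbf{D}_s\mathbf{u}$ simplifies to $\mathrm{S}_s^2 f-\mathrm{D}_s^2 f=f$ via \eqref{def-Sx2f}. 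Identity \eqref{def-fSxu} is derived symmetrically by exchanging the roles of \eqref{def-Dxfu} and \eqref{def-Sxfu}. I do not anticipate any serious obstacle here: every step is a mechanical translation by duality, and the only mild care required is sign bookkeeping, since the adjoint of $\mathrm{D}_s$ carries a minus sign while that of $\mathrm{S}_s$ does not.
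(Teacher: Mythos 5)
Your proof is correct: each identity follows by testing against an arbitrary $g\in\mathcal{P}$ and dualizing \eqref{def-fDxg}, \eqref{def-fSxg} and \eqref{def-Sx2f}, with \eqref{def-fDxu} and \eqref{def-fSxu} then obtained algebraically from \eqref{def-Dxfu}--\eqref{def-Sxfu} via \eqref{def-DxSxf} and \eqref{def-Sx2f}, and the sign bookkeeping is handled properly. The paper itself omits the proof of this lemma (it is quoted from the reference on classical orthogonal polynomials on lattices), and your duality argument is exactly the standard one used there, so there is nothing further to compare.
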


\begin{proposition}
Let $f\in \mathcal{P}$ and ${\bf u}\in \mathcal{P}^*$. Then
\begin{align}
\mathrm{D}_s ^n\mathrm{S}_sf=\mathrm{S}_s\mathrm{D}_s ^n f,\quad {\bf D}_s ^n{\bf S}_s{\bf u}={\bf S}_s{\bf D}_s ^n {\bf u}. \label{DxnSxf-DxnSxu}
\end{align}
\end{proposition}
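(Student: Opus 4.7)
The plan is to prove both identities by a direct induction on $n$, building on the $n=1$ commutation relation for $\mathrm{D}_s$ and $\mathrm{S}_s$ already recorded in \eqref{def-DxSxf}.

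For the polynomial identity $\mathrm{D}_s^n \mathrm{S}_s f = \mathrm{S}_s \mathrm{D}_s^n f$, I take the base case $n=1$ directly from \eqref{def-DxSxf}. For the inductive step, assuming the identity holds at level $n$, I would write
\begin{align*}
\mathrm{D}_s^{n+1}\mathrm{S}_s f
= \mathrm{D}_s\bigl(\mathrm{D}_s^n \mathrm{S}_s f\bigr)
= \mathrm{D}_s\bigl(\mathrm{S}_s \mathrm{D}_s^n f\bigr)
= \mathrm{S}_s\bigl(\mathrm{D}_s \mathrm{D}_s^n f\bigr)
= \mathrm{S}_s \mathrm{D}_s^{n+1} f,
\end{align*}
where the second equality is the induction hypothesis and the third is another application of \eqref{def-DxSxf}.

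For the functional identity $\mathbf{D}_s^n \mathbf{S}_s {\bf u} = \mathbf{S}_s \mathbf{D}_s^n {\bf u}$, I would first dualize the polynomial case at $n=1$ to establish the base: for any $f\in\mathcal{P}$,
\begin{align*}
\langle \mathbf{D}_s \mathbf{S}_s {\bf u}, f\rangle
&= -\langle \mathbf{S}_s {\bf u}, \mathrm{D}_s f\rangle
 = -\langle {\bf u}, \mathrm{S}_s \mathrm{D}_s f\rangle \\
&= -\langle {\bf u}, \mathrm{D}_s \mathrm{S}_s f\rangle
 = \langle \mathbf{D}_s {\bf u}, \mathrm{S}_s f\rangle
 = \langle \mathbf{S}_s \mathbf{D}_s {\bf u}, f\rangle,
\end{align*}
where the middle step uses \eqref{def-DxSxf}. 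With the base in hand, the inductive step is verbatim the same computation as in the polynomial case, but applied to the operators $\mathbf{D}_s$ and $\mathbf{S}_s$ acting on functionals.

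There is no real obstacle here: the only nontrivial ingredient is the commutation of $\mathrm{D}_s$ and $\mathrm{S}_s$ on polynomials, which is already supplied by \eqref{def-DxSxf}, and its transfer to functionals requires only a single use of the defining dualities $\langle\mathbf{D}_s{\bf u},f\rangle=-\langle{\bf u},\mathrm{D}_sf\rangle$ and $\langle\mathbf{S}_s{\bf u},f\rangle=\langle{\bf u},\mathrm{S}_sf\rangle$. Thus both statements reduce to a one-line induction after the $n=1$ case is in place.
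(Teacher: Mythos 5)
Your proof is correct: the commutation relation \eqref{def-DxSxf} gives the base case for polynomials, the dualization via $\langle\mathbf{D}_s{\bf u},f\rangle=-\langle{\bf u},\mathrm{D}_sf\rangle$ and $\langle\mathbf{S}_s{\bf u},f\rangle=\langle{\bf u},\mathrm{S}_sf\rangle$ transfers it to functionals, and the induction on $n$ is immediate in both settings. The paper itself states this proposition without proof, citing \cite{KCDMJP2022}, so there is no internal argument to compare against; your one-line induction is the standard and expected route.
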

\begin{proposition}
Let $z=x(s)=s+\gamma(1+(-1)^s)$, then the following holds:
\begin{align}
\mathrm{D}_s z^n &=n z^{n-1}-2n(n-1)(-1)^s\gamma z^{n-2}+u_nz^{n-3}+v_nz^{n-4}+w_nz^{n-5} +\cdots, \label{Dx-xn}\\[7pt]
\mathrm{S}_s z^n &= z^n-2n(-1)^s\gamma z^{n-1} +\widehat{u}_n z^{n-2}+\widehat{v}_nz^{n-3}+\widehat{w}_n z^{n-4}+\cdots, \label{Sx-xn}
\end{align}
 where
\begin{align}
u_n &:=\frac{n(n-1)(n-2)}{6}(1+12\gamma ^2), \label{unD} \\[7pt]
v_n &:=-(-1)^s\gamma \frac{n(n-1)(n-2)(n-3)}{3}(1+4\gamma ^2),\label{vnD}  \\[7pt]
w_n&:=\frac{n(n-1)(n-2)(n-3)(n-4)}{120}(1+40\gamma ^2 +80\gamma ^4),\label{wnD}\\[7pt]
\widehat{u}_n &:= \frac{n(n-1)}{2}(1+4\gamma ^2), \label{unS} \\[7pt]
\widehat{v}_n &:=-(-1)^s\gamma\frac{n(n-1)(n-2)}{3}(4\gamma ^2 +3), \label{vnS}\\[7pt]
\widehat{w}_n&:=\frac{n(n-1)(n-2)(n-3)}{24}(1+24\gamma ^2 +16\gamma^4). \label{wnS}
\end{align}
\end{proposition}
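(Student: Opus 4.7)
The plan is to reduce both expansions to a single-variable binomial calculation. Starting from $x(s)=s+\gamma(1+(-1)^s)$ and using $(-1)^{s\pm1}=-(-1)^s$, one gets
\[
x(s\pm 1)=(s\pm 1)+\gamma(1-(-1)^s)=x(s)-2\gamma(-1)^s\pm 1.
\]
Writing $y=x(s)$ and $\epsilon=-2\gamma(-1)^s$, this becomes $x(s\pm 1)=y+\epsilon\pm 1$, so the evaluation of both $\mathrm{D}_s y^n$ and $\mathrm{S}_s y^n$ reduces to shifting the single quantity $w:=y+\epsilon$ by $\pm 1$.

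Substituting into the definitions of $\mathrm{D}_s$ and $\mathrm{S}_s$ and using the elementary identities
\[
\frac{(w+1)^n-(w-1)^n}{2}=\sum_{\substack{1\leq j\leq n\\ j\text{ odd}}}\binom{n}{j}w^{n-j},\qquad
\frac{(w+1)^n+(w-1)^n}{2}=\sum_{\substack{0\leq j\leq n\\ j\text{ even}}}\binom{n}{j}w^{n-j},
\]
one obtains
\[
\mathrm{D}_s y^n=\sum_{j\text{ odd}}\binom{n}{j}(y+\epsilon)^{n-j},\qquad
\mathrm{S}_s y^n=\sum_{j\text{ even}}\binom{n}{j}(y+\epsilon)^{n-j}.
\]
Expanding each $(y+\epsilon)^{n-j}$ by the binomial theorem and regrouping by powers of $y$ shows that the coefficient of $y^{n-k}$ in $\mathrm{D}_s y^n$ is the finite sum $\sum_{j\text{ odd},\,j\leq k}\binom{n}{j}\binom{n-j}{k-j}\epsilon^{k-j}$, and analogously with $j$ running over even values for $\mathrm{S}_s y^n$.

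From here the verification is mechanical. Since $\epsilon^{2m}=(2\gamma)^{2m}$ is free of $(-1)^s$ while $\epsilon^{2m+1}=-(2\gamma)^{2m+1}(-1)^s$ carries a $(-1)^s$ factor, the parity of $k-j$ decides whether a $(-1)^s$ appears in the coefficient of $y^{n-k}$. In particular, for $\mathrm{D}_s y^n$ the coefficients of $y^{n-2}$ and $y^{n-4}$ inherit a $(-1)^s$ while those of $y^{n-1},y^{n-3},y^{n-5}$ do not, exactly as stated; the same pattern, shifted by one, occurs for $\mathrm{S}_s y^n$. Evaluating the above binomial sums for $k=1,\ldots,5$ and $k=0,\ldots,4$ respectively, and factoring the resulting polynomials in $n$, recovers $u_n,v_n,w_n$ and $\widehat{u}_n,\widehat{v}_n,\widehat{w}_n$ as given in \eqref{unD}--\eqref{wnS}.

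The main obstacle is purely bookkeeping: each of these six coefficients is a sum of two or three binomial-coefficient products with an appropriate power of $2\gamma$, and one must verify that they collapse to the compact factorisations $\tfrac{n(n-1)(n-2)}{6}(1+12\gamma^2)$, $\tfrac{n(n-1)}{2}(1+4\gamma^2)$, and so on. Once the single-variable reduction $x(s\pm 1)=y+\epsilon\pm 1$ is in place, no new idea is needed.
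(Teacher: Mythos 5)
Your proposal is correct, and it takes a genuinely different route from the paper. The paper proves the proposition by induction on $n$: it writes $z^{n+1}=z^n\cdot z$, applies the product rules $\mathrm{D}_s(fg)=(\mathrm{D}_sf)(\mathrm{S}_sg)+(\mathrm{S}_sf)(\mathrm{D}_sg)$ and $\mathrm{S}_s(fg)=(\mathrm{D}_sf)(\mathrm{D}_sg)+(\mathrm{S}_sf)(\mathrm{S}_sg)$, and then checks the coefficient recurrences $u_{n+1}=u_n+\widehat{u}_n+4n(n-1)\gamma^2$, $\widehat{u}_{n+1}=\widehat{u}_n+n+4n\gamma^2$, etc. Your argument instead exploits the identity $x(s\pm1)=x(s)-2\gamma(-1)^s\pm1$ (which I verified: $x(s\pm1)=s\pm1+\gamma(1-(-1)^s)$ and $s=x(s)-\gamma-\gamma(-1)^s$), reducing everything to the classical expansions of $\frac{(w+1)^n\mp(w-1)^n}{2}$ with $w=y+\epsilon$, $\epsilon=-2\gamma(-1)^s$, followed by a second binomial expansion in $\epsilon$. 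I checked the resulting double sums $\sum_{j}\binom{n}{j}\binom{n-j}{k-j}\epsilon^{k-j}$ for $k\le5$ (odd $j$) and $k\le4$ (even $j$); they do collapse to the stated $u_n,v_n,w_n,\widehat{u}_n,\widehat{v}_n,\widehat{w}_n$, and the parity bookkeeping for the $(-1)^s$ factors is right. What your approach buys is an explicit closed form for \emph{every} coefficient in the expansion, not just the first five, and it does not require knowing the answer in advance as the induction does; what the paper's approach buys is that it stays entirely inside the $\mathrm{D}_s,\mathrm{S}_s$ operator calculus used throughout, so it reuses Lemma~\ref{propertyDxSx} rather than the special structure of the bi-lattice shift. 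The only thing to flag is that your write-up defers the final evaluations ("mechanical bookkeeping") rather than displaying them, but the plan is fully executable and the computations do check out.
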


\begin{proof}
The proof is given by mathematical induction on $n$. 
For $n=0$, we have $\mathrm{D}_s z^0 =\mathrm{D}_s 1=0$ and $\mathrm{S}_s z^0 =1$. 
Since $u_0=v_0=w_0=0$ and $\widehat{u}_0=\widehat{v}_0=\widehat{w}_0=0$, we see that (\ref{Dx-xn})-(\ref{Sx-xn}) hold for $n=0$. 
Next suppose that relations (\ref{Dx-xn})-(\ref{Sx-xn}) are true for all integer numbers less than or equal to a fixed nonnegative integer number $n$. 
Using this hypothesis together with \eqref{def-Dxfg}, we obtain
\begin{align*}
\mathrm{D}_s z^{n+1} &=\mathrm{D}_s z^n ~ \mathrm{S}_s z + \mathrm{S}_s z^n ~\mathrm{D}_s z 
=  \big(z-2(-1)^s\gamma \big)\mathrm{D}_s z^n + \mathrm{S}_s z^n  \\[7pt]
&=z^n-2n(-1)^s\gamma z^{n-1} +\widehat{u}_n z^{n-2}+\widehat{v}_nz^{n-3}+\widehat{w}_n z^{n-4}+\cdots  \\[7pt]
&+\big(z-2(-1)^s\gamma \big)\big( n z^{n-1}-2n(n-1)(-1)^s\gamma z^{n-2}+u_nz^{n-3}+v_nz^{n-4} +\cdots \big)\\[7pt]
&=(n+1)z^{n} -2n(n+1)(-1)^s\gamma z^{n-1} +(u_n+\widehat{u}_n +4n(n-1)\gamma ^2 )z^{n-2}\\[7pt]
&+\big(v_n+\widehat{v}_n -2(-1)^s\gamma u_n\big)z^{n-3}+\big(w_n+\widehat{w}_n-2(-1)^s\gamma v_n \big)z^{n-4}+\cdots.
\end{align*}
The reader should check itself that $u_n+\widehat{u}_n +4n(n-1)\gamma ^2=u_{n+1}$, $v_n+\widehat{v}_n -2(-1)^s\gamma u_n=v_{n+1}$ and $w_n+\widehat{w}_n-2(-1)^s\gamma v_n=w_{n+1}$. So,
\begin{align*}
&\mathrm{D}_s z^{n+1}\\[7pt]
&=(n+1)z^{n} -2n(n+1)(-1)^s\gamma z^{n-1} +u_{n+1}z^{n-2}
+v_{n+1}z^{n-3}+w_{n+1} z^{n-4}+\cdots.
\end{align*}
Similarly,
\begin{align*}
\mathrm{S}_s z^{n+1} &=\mathrm{D}_s z~\mathrm{D}_s z^n  + \mathrm{S}_s z^n ~\mathrm{S}_s z 
= \mathrm{D}_s z^n + \big(z-2(-1)^s\gamma \big)\mathrm{S}_s z^n  \\[7pt]
&=n z^{n-1}-2n(n-1)(-1)^s\gamma z^{n-2}+u_nz^{n-3}+v_nz^{n-4}+w_nz^{n-5} +\cdots\\[7pt]
&+\big(z-2(-1)^s\gamma \big)\big(z^n-2n(-1)^s\gamma z^{n-1} +\widehat{u}_n z^{n-2}+\widehat{v}_nz^{n-3}+\widehat{w}_n z^{n-4}+\cdots  \big)\\[7pt]
&= z^{n+1} -2(n+1)(-1)^s\gamma z^n+(\widehat{u}_n+n+4n\gamma ^2)z^{n-1}\\[7pt]
&+\big(\widehat{v}_n -2n(n-1)(-1)^s\gamma -2(-1)^s\gamma \widehat{u}_n \big)z^{n-2} +(u_n+\widehat{w}_n-2(-1)^s\gamma \widehat{v}_n)z^{n-3} \\[7pt]&+\cdots.
\end{align*}
Again the reader can check that the following identities hold $\widehat{u}_n+n+4n\gamma ^2=\widehat{u}_{n+1}$, $\widehat{v}_n -2n(n-1)(-1)^s\gamma -2(-1)^s\gamma \widehat{u}_n=\widehat{v}_{n+1}$ and
$u_n+\widehat{w}_n-2(-1)^s\gamma \widehat{v}_n=\widehat{w}_{n+1}$. Therefore
\begin{align*}
\mathrm{S}_s z^{n+1}=z^{n+1} -2(n+1)(-1)^s\gamma z^{n} +\widehat{u}_{n+1}z^{n-1}
+\widehat{v}_{n+1}z^{n-2}+\widehat{w}_{n+1} z^{n-3}+\cdots.
\end{align*}
Hence, we obtain (\ref{Dx-xn})-(\ref{Sx-xn}) with $n$ replaced by $n+1$, and the result follows.
\end{proof}

Here we state a useful version of the Leibniz formula, for the left multiplication of a functional by a polynomial. 

\begin{proposition}[Leibniz's formula]\label{Leibniz-rule-NUL}
Let ${\bf u}\in\mathcal{P}^*$ and $f\in\mathcal{P}$. Then
\begin{align}
\mathbf{D}_s ^n \big(f{\bf u}\big)
=\sum_{k=0}^{n} \mathrm{T}_{n,k}f\, \mathbf{D}_s^{n-k} \mathbf{S}_s^k {\bf u}
 \quad (n=0,1,\ldots),\label{leibnizfor-NUL}
\end{align}
where $\mathrm{T}_{n,k}f$ is a polynomial defined as follows:
\begin{align}\label{T00}
\mathrm{T}_{0,0}f &:=f\;;
\end{align}
and for $n\geq 1$,
\begin{align}\label{Tnk}
\mathrm{T}_{n,k}f&:= \mathrm{S}_s \mathrm{T}_{n-1,k}f+\mathrm{D}_s \mathrm{T}_{n-1,k-1}f,
\end{align}
with the conventions $\mathrm{T}_{n,k}f:=0$ whenever $k>n$ or $k<0$. 
\end{proposition}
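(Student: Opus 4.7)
The plan is to prove the formula by induction on $n$, using the one-step Leibniz rule \eqref{def-Dxfu} together with the commutativity \eqref{DxnSxf-DxnSxu} of $\mathbf{D}_s$ and $\mathbf{S}_s$ on functionals. The base case $n=0$ is immediate: the right-hand side reduces to $\mathrm{T}_{0,0}f \cdot \mathbf{u} = f\mathbf{u}$, which is exactly $\mathbf{D}_s^0(f\mathbf{u})$.

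For the inductive step, assuming the identity for some $n$, I would write $\mathbf{D}_s^{n+1}(f\mathbf{u}) = \mathbf{D}_s\bigl(\mathbf{D}_s^n(f\mathbf{u})\bigr)$ and apply the induction hypothesis to obtain
\begin{align*}
\mathbf{D}_s^{n+1}(f\mathbf{u}) = \sum_{k=0}^{n} \mathbf{D}_s\bigl( \mathrm{T}_{n,k}f \cdot \mathbf{D}_s^{n-k}\mathbf{S}_s^{k}\mathbf{u}\bigr).
\end{align*}
Now for each summand I would use \eqref{def-Dxfu} with $g = \mathrm{T}_{n,k}f$ and functional $\mathbf{v} = \mathbf{D}_s^{n-k}\mathbf{S}_s^{k}\mathbf{u}$, yielding
\begin{align*}
\mathbf{D}_s\bigl( \mathrm{T}_{n,k}f \cdot \mathbf{v}\bigr) = \mathrm{S}_s \mathrm{T}_{n,k}f \cdot \mathbf{D}_s \mathbf{v} + \mathrm{D}_s \mathrm{T}_{n,k}f \cdot \mathbf{S}_s \mathbf{v}.
\end{align*}
Thanks to \eqref{DxnSxf-DxnSxu}, $\mathbf{D}_s \mathbf{v} = \mathbf{D}_s^{n-k+1}\mathbf{S}_s^{k}\mathbf{u}$ and $\mathbf{S}_s \mathbf{v} = \mathbf{D}_s^{n-k}\mathbf{S}_s^{k+1}\mathbf{u}$, so the operator commutation is cost-free.

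The only real work is then a bookkeeping/re-indexing step: in the second piece I would shift $k \mapsto k-1$ to align it with $\mathbf{D}_s^{n+1-k}\mathbf{S}_s^{k}\mathbf{u}$, use the conventions $\mathrm{T}_{n,-1}f = \mathrm{T}_{n,n+1}f = 0$ to extend both sums to $k=0,\dots,n+1$, and collect the resulting coefficient of $\mathbf{D}_s^{n+1-k}\mathbf{S}_s^{k}\mathbf{u}$ as $\mathrm{S}_s \mathrm{T}_{n,k}f + \mathrm{D}_s \mathrm{T}_{n,k-1}f$. By the recursion \eqref{Tnk}, this coefficient is exactly $\mathrm{T}_{n+1,k}f$, completing the induction.

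The main obstacle, if there is one, is not conceptual but notational: one must correctly handle the boundary terms at $k=0$ and $k=n+1$ after re-indexing, and verify that the vanishing conventions for $\mathrm{T}_{n,k}f$ make both extended sums agree termwise with the target expression. Everything else follows directly from the already established properties \eqref{def-Dxfu} and \eqref{DxnSxf-DxnSxu}.
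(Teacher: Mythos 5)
Your induction is correct and complete: the base case is immediate, the one-step rule \eqref{def-Dxfu} together with the commutation \eqref{DxnSxf-DxnSxu} produces exactly the two sums you describe, and after the shift $k\mapsto k-1$ the vanishing conventions let you collect the coefficient of $\mathbf{D}_s^{n+1-k}\mathbf{S}_s^{k}{\bf u}$ as $\mathrm{S}_s\mathrm{T}_{n,k}f+\mathrm{D}_s\mathrm{T}_{n,k-1}f$, which is $\mathrm{T}_{n+1,k}f$ by \eqref{Tnk}. The paper itself omits the proof (the proposition is quoted from the earlier work on lattices), and your argument is precisely the standard one used there.
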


\begin{corollary}\label{LeibnizCor1}
Let $z=x(s)=s+\gamma(1+(-1)^s)$. Let ${\bf u}\in\mathcal{P}^*$ and $f\in\mathcal{P}_2$. Set $f(z)=az^2+bz+c$ with $a,b,c\in\mathbb{C}$. Then
\begin{align}\label{leibnizfor-degree-pi-2}
\mathbf{D}_s ^n (f{\bf u})
&=\left(f(z)-(-1)^s\gamma (1-(-1)^n)f'(z)+\frac{1}{2}\Big(n+2\gamma ^2 \big(1-(-1)^n\big) \Big)f''(z)   \right) \mathbf{D}_s^n{\bf u}  \\[7pt]
&\quad+n \Big(f'(z)-(-1)^s\gamma \big(1-(-1)^n\big) f''(z) \Big) \mathbf{D}_s^{n-1}\mathbf{S}_s{\bf u}\nonumber \\[7pt]
&\quad+\frac{n(n-1)}{2}f''(z)  \,\mathbf{D}_s^{n-2}\mathbf{S}_s^2{\bf u}.  \nonumber
\end{align}
In particular, 
\begin{align} \label{leib-for-degree-pi-1}
\mathbf{D}_s ^n \big((az+b){\bf u}\big)= \Big(a\big(z-(-1)^s\gamma (1-(-1)^n)\big)+b\Big)\mathbf{D}_s^n{\bf u} +an \mathbf{D}_s^{n-1} \mathbf{S}_s {\bf u}.
\end{align}
\end{corollary}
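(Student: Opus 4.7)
The plan is to derive this corollary as a specialization of the general Leibniz formula \eqref{leibnizfor-NUL} to polynomials $f$ of degree at most two. The key preliminary step is to identify the coefficients $\mathrm{T}_{n,k}f$ in closed form. First I would prove by induction on $n$ that $\mathrm{T}_{n,k}f=\binom{n}{k}\mathrm{S}_s^{n-k}\mathrm{D}_s^k f$ for all $f\in\mathcal{P}$. The base case $n=0$ is the definition \eqref{T00}. For the inductive step, the recursion \eqref{Tnk} combined with the commutativity $\mathrm{D}_s\mathrm{S}_s=\mathrm{S}_s\mathrm{D}_s$ from \eqref{def-DxSxf} reduces the step to the Pascal identity $\binom{n-1}{k}+\binom{n-1}{k-1}=\binom{n}{k}$. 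Since $f\in\mathcal{P}_2$ forces $\mathrm{D}_s^k f=0$ for all $k\geq 3$ (each $\mathrm{D}_s$ lowering the polynomial degree by one, by \eqref{Dx-xn}), the sum in \eqref{leibnizfor-NUL} truncates to $k\in\{0,1,2\}$; and since $\mathrm{D}_s^2 f=f''(z)=2a$ is constant, the term $k=2$ directly yields the coefficient $\frac{n(n-1)}{2}f''(z)$ of $\mathbf{D}_s^{n-2}\mathbf{S}_s^2{\bf u}$.

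What remains is to compute $\mathrm{S}_s^n f$ and $\mathrm{S}_s^{n-1}\mathrm{D}_s f$. By linearity it suffices to evaluate $\mathrm{S}_s^n$ on $1$, $z$, and $z^2$. Trivially $\mathrm{S}_s^n 1=1$; using $\mathrm{S}_s((-1)^s)=-(-1)^s$, a short induction gives $\mathrm{S}_s^n z = z-(-1)^s\gamma(1-(-1)^n)$; and using $\mathrm{S}_s^2 g = \mathrm{D}_s^2 g + g$ from \eqref{def-Sx2f} together with $\mathrm{D}_s^2 z^2 = 2$, another induction gives $\mathrm{S}_s^{2m}z^2=z^2+2m$, whence $\mathrm{S}_s^{2m+1}z^2=z^2+2m-4(-1)^s\gamma z+1+4\gamma^2$. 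Both parities combine into the single closed formula $\mathrm{S}_s^n z^2 = z^2+n-2(-1)^s\gamma(1-(-1)^n)z+2\gamma^2(1-(-1)^n)$. Assembling $\mathrm{S}_s^n f$ by linearity and grouping terms by $f$, $f'$, $f''$ reproduces precisely the coefficient of $\mathbf{D}_s^n{\bf u}$ in the statement. An analogous evaluation applied to $\mathrm{D}_s f$ (which is linear in $z$ up to a $(-1)^s\gamma$-term), followed by the identity $1-(-1)^{n-1}=1+(-1)^n$, produces the coefficient of $\mathbf{D}_s^{n-1}\mathbf{S}_s{\bf u}$.

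The main bookkeeping obstacle is tracking the alternating $(-1)^s$ factors that appear when iterating $\mathrm{S}_s$ on $z^2$; the identity $\mathrm{S}_s^2 g=\mathrm{D}_s^2 g+g$ is the cleanest device here, since it separates the calculation according to the parity of $n$ and avoids having to iterate the more complicated formulas \eqref{Sx-xn}. Finally, formula \eqref{leib-for-degree-pi-1} is the immediate specialization of \eqref{leibnizfor-degree-pi-2} obtained by setting the leading coefficient of $f$ equal to zero, so that $f''\equiv 0$: this kills the $\mathbf{D}_s^{n-2}\mathbf{S}_s^2{\bf u}$ term and collapses the coefficient of $\mathbf{D}_s^n{\bf u}$ to $a(z-(-1)^s\gamma(1-(-1)^n))+b$, as claimed.
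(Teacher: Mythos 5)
Your argument is correct and arrives at exactly the three coefficient identities \eqref{Tn0-a}--\eqref{Tn2-a} that the paper needs, but by a genuinely different route. The paper fixes the quadratic $f$ from the outset and proves those three formulas simultaneously by induction on $n$ straight from the recursion \eqref{T00}--\eqref{Tnk}, expanding $\mathrm{S}_s\mathrm{T}_{n,k}f+\mathrm{D}_s\mathrm{T}_{n,k-1}f$ term by term. You instead first establish the closed form $\mathrm{T}_{n,k}f=\binom{n}{k}\mathrm{S}_s^{n-k}\mathrm{D}_s^{k}f$ for \emph{arbitrary} $f$, the induction collapsing, via the commutation \eqref{def-DxSxf} (iterated to $\mathrm{D}_s\mathrm{S}_s^{m}=\mathrm{S}_s^{m}\mathrm{D}_s$), to Pascal's identity; only then do you specialize. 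This buys two things the paper's computation leaves implicit: the truncation of \eqref{leibnizfor-NUL} at $k=2$ becomes automatic, and the factor $\tfrac{n(n-1)}{2}$ in front of $f''$ is explained by the binomial coefficient rather than verified a posteriori. The residual work of evaluating $\mathrm{S}_s^{n}$ on $1$, $z$, $z^{2}$ and on $\mathrm{D}_sf$ is handled efficiently by $\mathrm{S}_s^{2}g=\mathrm{D}_s^{2}g+g$ together with $\mathrm{S}_s(-1)^{s}=-(-1)^{s}$; I checked that your expressions $\mathrm{S}_s^{n}z=z-(-1)^{s}\gamma(1-(-1)^{n})$ and $\mathrm{S}_s^{n}z^{2}=z^{2}+n-2(-1)^{s}\gamma(1-(-1)^{n})z+2\gamma^{2}(1-(-1)^{n})$ reassemble exactly into the stated coefficients of $\mathbf{D}_s^{n}\mathbf{u}$ and $\mathbf{D}_s^{n-1}\mathbf{S}_s\mathbf{u}$ (using $(1-(-1)^{n})^{2}=2(1-(-1)^{n})$ to match the paper's form). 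One small caution: after a single application of $\mathrm{D}_s$ or $\mathrm{S}_s$ the objects carry $(-1)^{s}$-dependent coefficients, so ``$\mathrm{D}_s$ lowers the degree by one, hence $\mathrm{D}_s^{3}f=0$'' must be read in that extended sense (it holds because $\mathrm{D}_s(-1)^{s}=0$); the paper works under the same convention, so this is a point of phrasing, not a gap. Your derivation of \eqref{leib-for-degree-pi-1} by killing the quadratic coefficient is exactly the intended specialization.
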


\begin{proof} 
It is enough to prove \eqref{leibnizfor-degree-pi-2}. Since $f(z)=az^2+bz+c$, then taking into account \eqref{leibnizfor-NUL}, we only need to show that 
\begin{align}
&(\mathrm{T}_{n,0}f)(z)=f(z)-(-1)^s\gamma (1-(-1)^n)f'(z)+\frac{1}{2}\Big(n+2\gamma ^2 \big(1-(-1)^n\big) \Big)f''(z) 
 ,\label{Tn0-a} \\[7pt]
&(\mathrm{T}_{n,1}f)(z)=n \Big(f'(z)-(-1)^s\gamma \big(1-(-1)^n\big) f''(z) \Big), \label{Tn1-a}\\[7pt]
&(\mathrm{T}_{n,2}f)(z)=n(n-1)a, \label{Tn2-a}
\end{align}
where $\mathrm{T}_{n,k}f$ is defined by \eqref{T00}-\eqref{Tnk}.
Note that 
\begin{align*}
(\mathrm{T}_{n,0}f)(z)&= a\big(z-(-1)^s\gamma (1-(-1)^n)\big)^2 +b\big(z-(-1)^s\gamma (1-(-1)^n)\big) +an+c,\\[7pt]
 (\mathrm{T}_{n,1}f)(z)&=2an\big(z-(-1)^s\gamma (1-(-1)^n)\big) +bn. 
\end{align*}
Equation \eqref{leibnizfor-degree-pi-2} is trivial for $n=0$ since $(\mathrm{T}_{0,0}f)(z)=f(z)$ and $(\mathrm{T}_{0,-1}f)(z)=0=(\mathrm{T}_{0,-2}f)(z)$. Now suppose that \eqref{Tn0-a}-\eqref{Tn2-a} are true for some $k=0,1,\ldots,n$. Since $\mathrm{D}_sz^2=2z-4(-1)^s\gamma $, $\mathrm{S}_sz^2=z^2 -4(-1)^s\gamma z +4\gamma ^2+1$, $\mathrm{S}_s z=z-2(-1)^s\gamma$, then using \eqref{Tnk} and the induction hypothesis, we obtain
\begin{align*}
(\mathrm{T}_{n+1,0}&f)(z)= (\mathrm{S}_s\mathrm{T}_{n,0}f)(z)\\[7pt]
&=a\big(z^2 -4(-1)^s\gamma z +4\gamma ^2+1 \big) +b\big(z-2(-1)^s\gamma  \big) \\[7pt]
&\quad +2a\gamma (1-(-1)^n)((-1)^sz-2\gamma) +an +c +\gamma (1-(-1)^n)(2a\gamma +(-1)^sb)\\[7pt]
&=az^2 +\big(b-2a\gamma (-1)^s(1+(-1)^n)  \big)z +a(n+1)+c \\[7pt]
&\quad +\gamma (1+(-1)^n)(2a\gamma -(-1)^sb).
\end{align*}
Therefore \eqref{Tn0-a} is true for $n$ replaced by $n+1$. Similarly, from \eqref{Tnk} and the induction hypothesis, we obtain
\begin{align*}
(\mathrm{T}_{n+1,1}f)(z)&=(\mathrm{S}_s\mathrm{T}_{n,1}f)(z) +(\mathrm{D}_s\mathrm{T}_{n,0}f)(z)\\[7pt]
&=2an\big(z-2(-1)^s\gamma\big)+bn+2an(-1)^s\gamma (1-(-1)^n) \\[7pt]
&\quad+a\big(2z-4(-1)^s\gamma\big)+b+2a(-1)^s\gamma (1-(-1)^n)\\[7pt]
&=2a(n+1)z +b(n+1) -2a(-1)^s\gamma (n+1)(1+(-1)^n).
\end{align*} 
This also shows that \eqref{Tn1-a} is true for $n$ replaced by $n+1$. Finally, as in the previous cases, we obtain
\begin{align*}
(\mathrm{T}_{n+1,2}f)(z)&=(\mathrm{S}_s\mathrm{T}_{n,2}f)(z) +(\mathrm{D}_s\mathrm{T}_{n,1}f)(z)\\[7pt]
&=n(n-1)a+2an =an(n+1),
\end{align*}
and so \eqref{Tn2-a} also becomes true for $n$ replaced by $n+1$, and the proof is completed. 
\end{proof}

We denote by $P_n ^{[k]}$ the monic polynomial of degree $n$ defined by
\begin{align}
P_n ^{[k]} (z):=\frac{D_x ^k P_{n+k} (z)}{ \prod_{j=1} ^k (n+j)} =\frac{n !}{(n+k) !} D_x ^k P_{n+k} (z) \quad (k,n=0,1,\ldots). \label{Pnkx}
\end{align}
Here, as usual, it is understood that $\mathrm{D}_s ^0 f=f $, empty product equals one.

\begin{definition}
Let $\phi\in\mathcal{P}_2$ and $\psi\in\mathcal{P}_1$.
$(\phi,\psi)$ is called an admissible pair if
$$
d_n\equiv d_n(\phi,\psi,x):=\frac12\,n\,\phi^{\prime\prime}+\psi'\neq0 \quad (n=0,1,\ldots).
$$
\end{definition}

This is an analogous for bi-lattices of the original definition of Maroni (cf. \cite{M1991}).

\begin{lemma} \label{Charact-theo}\quad
Let ${\bf u}$ be a linear functional and $\phi$, $\psi$ two polynomials of degree at most two and one, respectively, such that
\begin{align}\label{pearson-eq}
{\bf D}_s(\phi {\bf u})={\bf S}_s(\psi {\bf u}).
\end{align}
Then for any nonzero integer $k$,
\begin{align}
&{\bf D}_s(\phi^{[k]} {\bf u}^{[k]})={\bf S}_s(\psi^{[k]} {\bf u}^{[k]}),\label{funct-eq-uk}
\end{align}
where
\begin{align}
& \phi^{[0]}:=\phi,\quad \psi^{[0]}:=\psi, \label{def-phi0psi0}\\[7pt]
& \phi^{[k+1]}:=\mathrm{S}_s\phi^{[k]}+\mathrm{D}_s\psi^{[k]}, \label{def-phik}\\[7pt]
& \psi^{[k+1]}:=\mathrm{D}_s\phi^{[k]}+\mathrm{S}_s\psi^{[k]}, \label{def-psik}
\end{align}
and functionals ${\bf u}^{[k]}\in\mathcal{P}^*$defined by
\begin{equation}\label{uk-func-Dx}
{\bf u}^{[0]}:={\bf u},\quad
{\bf u}^{[k+1]}:=\mathbf{D}_s\big(\psi^{[k]}{\bf u}^{[k]}\big)-\mathbf{S}_s\big(\phi^{[k]}{\bf u}^{[k]}\big).
\end{equation}
\end{lemma}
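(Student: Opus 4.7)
The natural approach is induction on $k$. The base case $k=0$ is precisely the hypothesis \eqref{pearson-eq}, so everything rests on the inductive step: assuming $\mathbf{D}_s(\phi^{[k]} {\bf u}^{[k]}) = \mathbf{S}_s(\psi^{[k]} {\bf u}^{[k]})$, derive the same identity with $k$ replaced by $k+1$.

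The plan is to first compute $\mathbf{D}_s {\bf u}^{[k+1]}$ and $\mathbf{S}_s {\bf u}^{[k+1]}$ in a very compact form. Writing $A:=\phi^{[k]}{\bf u}^{[k]}$ and $B:=\psi^{[k]}{\bf u}^{[k]}$, the induction hypothesis reads $\mathbf{D}_s A = \mathbf{S}_s B$, and by \eqref{uk-func-Dx} we have ${\bf u}^{[k+1]}=\mathbf{D}_s B - \mathbf{S}_s A$. Applying $\mathbf{D}_s$, using the commutation $\mathbf{D}_s\mathbf{S}_s=\mathbf{S}_s\mathbf{D}_s$ from \eqref{DxnSxf-DxnSxu}, the identity \eqref{def-Dx2u-Sx2u} in the form $\mathbf{D}_s^2 B=\mathbf{S}_s^2 B - B$, and the induction hypothesis once more, I expect to get
\[
\mathbf{D}_s {\bf u}^{[k+1]} = \mathbf{D}_s^2 B - \mathbf{S}_s\mathbf{D}_s A = (\mathbf{S}_s^2 B - B) - \mathbf{S}_s^2 B = -B = -\psi^{[k]}{\bf u}^{[k]}.
\]
A symmetric computation with $\mathbf{S}_s$ applied to ${\bf u}^{[k+1]}$, using $\mathbf{S}_s\mathbf{D}_s B=\mathbf{D}_s\mathbf{S}_s B=\mathbf{D}_s^2 A$ and then \eqref{def-Dx2u-Sx2u} on $A$, should give $\mathbf{S}_s {\bf u}^{[k+1]} = -\phi^{[k]}{\bf u}^{[k]}$. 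These two clean identities are the technical heart of the argument.

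Next, I expand the target using the product rules \eqref{def-Dxfu}--\eqref{def-Sxfu} and group terms:
\[
\mathbf{D}_s(\phi^{[k+1]}{\bf u}^{[k+1]}) - \mathbf{S}_s(\psi^{[k+1]}{\bf u}^{[k+1]})
= (\mathrm{S}_s\phi^{[k+1]}-\mathrm{D}_s\psi^{[k+1]})\mathbf{D}_s{\bf u}^{[k+1]}
+(\mathrm{D}_s\phi^{[k+1]}-\mathrm{S}_s\psi^{[k+1]})\mathbf{S}_s{\bf u}^{[k+1]}.
\]
Now I use \eqref{def-phik}--\eqref{def-psik} together with \eqref{def-DxSxf} and \eqref{def-Sx2f} to simplify the scalar coefficients: $\mathrm{S}_s\phi^{[k+1]}-\mathrm{D}_s\psi^{[k+1]} = \mathrm{S}_s^2\phi^{[k]} - \mathrm{D}_s^2\phi^{[k]} = \phi^{[k]}$, and similarly $\mathrm{D}_s\phi^{[k+1]}-\mathrm{S}_s\psi^{[k+1]} = \mathrm{D}_s^2\psi^{[k]} - \mathrm{S}_s^2\psi^{[k]} = -\psi^{[k]}$. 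Substituting these together with the formulas for $\mathbf{D}_s{\bf u}^{[k+1]}$ and $\mathbf{S}_s{\bf u}^{[k+1]}$ obtained above produces $-\phi^{[k]}\psi^{[k]}{\bf u}^{[k]} + \psi^{[k]}\phi^{[k]}{\bf u}^{[k]} = 0$, closing the induction.

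I do not anticipate a genuine obstacle: the argument is a routine but carefully bookkept computation. The only place where one must be attentive is the derivation of $\mathbf{D}_s{\bf u}^{[k+1]}=-\psi^{[k]}{\bf u}^{[k]}$ and $\mathbf{S}_s{\bf u}^{[k+1]}=-\phi^{[k]}{\bf u}^{[k]}$, since these require the induction hypothesis, the commutation of $\mathbf{D}_s$ and $\mathbf{S}_s$, and the quadratic identity \eqref{def-Dx2u-Sx2u} to be combined in the correct order; once these are in hand, everything else is algebraic cancellation driven by \eqref{def-Sx2f}.
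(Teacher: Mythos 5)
Your proof is correct, but it is organized differently from the paper's, and in a way that is arguably cleaner. The paper's proof expands $\mathbf{D}_s(\phi^{[k+1]}{\bf u}^{[k+1]})$ and $\mathbf{S}_s(\psi^{[k+1]}{\bf u}^{[k+1]})$ separately, using the Leibniz-type rules on the full products $\phi^{[k+1]}{\bf u}^{[k+1]}$ and $\psi^{[k+1]}{\bf u}^{[k+1]}$ with ${\bf u}^{[k+1]}$ written out via \eqref{uk-func-Dx}, and shows that both reduce to the same expression $-\bigl(\phi^{[k]}\mathrm{D}_s\mathrm{S}_s\phi^{[k]}+\psi^{[k]}(\mathrm{D}_s^2\phi^{[k]}+\phi^{[k]}+\mathrm{D}_s\mathrm{S}_s\psi^{[k]})\bigr){\bf u}^{[k]}$; this requires the auxiliary claim $\mathrm{S}_s^2\psi^{[k]}=\psi^{[k]}$, which in turn rests on the degree facts $\mathrm{D}_s^2\psi^{[k]}=0$ and $\mathrm{D}_s^3\phi^{[k]}=0$. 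You instead first derive the compact identities $\mathbf{D}_s{\bf u}^{[k+1]}=-\psi^{[k]}{\bf u}^{[k]}$ and $\mathbf{S}_s{\bf u}^{[k+1]}=-\phi^{[k]}{\bf u}^{[k]}$ --- which are exactly the content of the paper's Lemma \ref{lemmaA}, proved there \emph{after} and \emph{from} this lemma, but which you legitimately obtain inside the inductive step using only the level-$k$ hypothesis, the commutation \eqref{DxnSxf-DxnSxu} and \eqref{def-Dx2u-Sx2u} --- and then compute the difference of the two sides, so that the polynomial coefficients collapse via $\mathrm{S}_s^2f-\mathrm{D}_s^2f=f$ without any appeal to the degrees of $\phi^{[k]}$ or $\psi^{[k]}$. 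Your route buys a shorter computation and makes the role of Lemma \ref{lemmaA} transparent (it falls out as a by-product rather than being a separate consequence); the paper's route keeps the two sides symmetric and produces the explicit intermediate formula \eqref{one-proof-funct-for-uk-phik-psik}, but at the cost of heavier bookkeeping. All the identities you invoke (\eqref{def-Dxfu}, \eqref{def-Sxfu}, \eqref{def-DxSxf}, \eqref{def-Sx2f}, \eqref{def-Dx2u-Sx2u}, \eqref{DxnSxf-DxnSxu}) are available at the point where the lemma is stated, so there is no circularity and no gap.
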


\begin{proof}
We first claim that 
\begin{align}\label{psik-inv-Sx2}
\mathrm{S}_s ^2\psi ^{[k]}=\psi ^{[k]}.
\end{align}
Indeed, if $k=0$, this is trivial since $\mathrm{S}_s ^2 z=z$, and so $\mathrm{S}_s ^2\psi =\psi$. Now with $k$ fixed in $\mathbb{N}$, we use \eqref{def-psik}, \eqref{DxnSxf-DxnSxu}, \eqref{def-Sx2f} and the fact that $\mathrm{D}_s ^2\psi ^{[k-1]}=0$, $\mathrm{D}_s ^3 \phi ^{[k-1]}=0$ to obtain
\begin{align*}
\mathrm{S}_s ^2\psi ^{[k]}&=\mathrm{S}_s ^2\big( \mathrm{D}_s\phi^{[k-1]}+\mathrm{S}_s\psi^{[k-1]}\big)  \\[7pt]
&=\mathrm{S}_s\big( \mathrm{D}_s \mathrm{S}_s\phi ^{[k-1]} +\mathrm{S}_s ^2 \psi ^{[k-1]}
\big)=\mathrm{S}_s\big( \mathrm{D}_s \mathrm{S}_s\phi ^{[k-1]} +\mathrm{D}_s ^2 \psi ^{[k-1]}
+\psi ^{[k-1]}\big)\\[7pt]
&=\mathrm{D}_s\big(\mathrm{S}_s ^2\phi ^{[k-1]}  \big)+\mathrm{S}_s \psi ^{[k-1]}\\[7pt]
&=\mathrm{D}_s\big(\mathrm{D}_s ^2\phi ^{[k-1]}+\phi ^{[k-1]}  \big)+\mathrm{S}_s \psi ^{[k-1]}= \mathrm{D}_s\phi ^{[k-1]}+\mathrm{S}_s \psi ^{[k-1]}=\psi ^{[k]},
\end{align*}
and so \eqref{psik-inv-Sx2} holds. Now by induction on $k\in \mathbb{N}_0$. Equation \eqref{funct-eq-uk} is trivial for $k=0$. Suppose that \eqref{funct-eq-uk} is satisfied for some $n=0,1,\ldots, k$. Using \eqref{def-phik}, \eqref{uk-func-Dx}, \eqref{def-Dxfu}, \eqref{DxnSxf-DxnSxu} and the induction hypothesis, we write
\begin{align*}
{\bf D}_s \big(\phi^{[k+1]}{\bf u} ^{[k+1]}  \big)&= {\bf D}_s \Big(\big( \mathrm{S}_s\phi^{[k]}+\mathrm{D}_s\psi^{[k]} \big)\Big( \mathbf{D}_s\big(\psi^{[k]}{\bf u}^{[k]}\big)-\mathbf{S}_s\big(\phi^{[k]}{\bf u}^{[k]}\big) \Big)   \Big)\\
&=\big(\mathrm{D}_s\mathrm{S}_s\phi ^{[k]} +\mathrm{D}_s ^2\psi ^{[k]}  \big)\Big({\bf D}_s{\bf S}_s(\psi ^{[k]} {\bf u} ^{[k]}) -  { {\bf S}_s ^2}  (\phi ^{[k]}{\bf u} ^{[k]})  \Big) \\[7pt]
&\quad+\big(\mathrm{S}_s ^2\phi ^{[k]} +\mathrm{D}_s \mathrm{S}_s\psi ^{[k]}  \big)\Big({\bf D}_s ^2(\psi ^{[k]} {\bf u} ^{[k]}) -{ {\bf D}_s} {\bf S}_s (\phi ^{[k]}{\bf u} ^{[k]})  \Big) \\[7pt]
&=\mathrm{D}_s\mathrm{S}_s\phi ^{[k]} \Big({\bf D}_s ^2(\phi ^{[k]} {\bf u} ^{[k]}) -  {\bf S}_s ^2  (\phi ^{[k]}{\bf u} ^{[k]})  \Big) \\[7pt]
&\quad+\big(\mathrm{D}_s ^2\phi ^{[k]}+\phi ^{[k]} +\mathrm{D}_s \mathrm{S}_s\psi ^{[k]}  \big)\Big({\bf D}_s ^2(\psi ^{[k]} {\bf u} ^{[k]}) -{\bf S}_s ^2(\psi ^{[k]}{\bf u} ^{[k]})  \Big).
\end{align*}
We then use \eqref{def-Dx2u-Sx2u} to deduce 
\begin{align}\label{one-proof-funct-for-uk-phik-psik}
{\bf D}_s \big(\phi^{[k+1]}{\bf u} ^{[k+1]}  \big)=-\Big(\phi ^{[k]}\mathrm{D}_s\mathrm{S}_s \phi ^{[k]} +\psi ^{[k]}\big( \mathrm{D}_s ^2\phi ^{[k]}+\phi ^{[k]} +\mathrm{D}_s \mathrm{S}_s\psi ^{[k]}  \big)  \Big){\bf u}^{[k]}.
\end{align}
Similarly, using \eqref{def-phik}, \eqref{uk-func-Dx}, \eqref{def-Sxfu}, { \eqref{DxnSxf-DxnSxu}}, \eqref{psik-inv-Sx2} and the induction hypothesis, we write
\begin{align*}
{\bf S}_s \big(\psi^{[k+1]}{\bf u} ^{[k+1]}  \big)&= {\bf S}_s \Big(\big( \mathrm{D}_s\phi^{[k]}+\mathrm{S}_s\psi^{[k]} \big)\Big( \mathbf{D}_s\big(\psi^{[k]}{\bf u}^{[k]}\big)-\mathbf{S}_s\big(\phi^{[k]}{\bf u}^{[k]}\big) \Big)   \Big)\\
&=\big(\mathrm{D}_s ^2 \phi ^{[k]} +\mathrm{D}_s \mathrm{S}_s \psi ^{[k]}  \big)\Big({\bf D}_s ^2(\psi ^{[k]} {\bf u} ^{[k]}) - {\bf S}_s ^2  (\psi ^{[k]}{\bf u} ^{[k]})  \Big) \\[7pt]
&\quad+\big(\mathrm{S}_s ^2\psi ^{[k]} +\mathrm{D}_s \mathrm{S}_s\phi ^{[k]}  \big)\Big({\bf D}_s ^2(\phi ^{[k]} {\bf u} ^{[k]}) -{\bf S}_s ^2(\phi ^{[k]}{\bf u} ^{[k]})  \Big) \\[7pt]
&=-\Big(\phi ^{[k]}\mathrm{D}_s\mathrm{S}_s \phi ^{[k]}+\phi ^{[k]}\mathrm{S}_s ^2\psi ^{[k]}  +\psi ^{[k]}\mathrm{D}_s ^2\phi ^{[k]}+\psi ^{[k]}\mathrm{D}_s \mathrm{S}_s\psi ^{[k]}  \big)  \Big){\bf u}^{[k]}\\[7pt]
&=-\Big(\phi ^{[k]}\mathrm{D}_s\mathrm{S}_s \phi ^{[k]}+\phi ^{[k]}\psi ^{[k]}  +\psi ^{[k]}\mathrm{D}_s ^2\phi ^{[k]}+\psi ^{[k]}\mathrm{D}_s \mathrm{S}_s\psi ^{[k]}  \big)  \Big){\bf u}^{[k]}.
\end{align*}
From \eqref{one-proof-funct-for-uk-phik-psik}, we get
${\bf D}_s (\phi^{[k+1]}{\bf u} ^{[k+1]} )={\bf S}_s (\psi^{[k+1]}{\bf u} ^{[k+1]}) $ and hence \eqref{funct-eq-uk} holds of $n=k+1$, and therefore for all $k$.
\end{proof}

\begin{proposition}
Let $\phi\in\mathcal{P}_2$ and $\psi\in\mathcal{P}_1$, so there are $a,b,c,d,e\in\mathbb{C}$ such that
$$\phi(z)=az^2+bz+c,\quad \psi(z)=dz+e.$$ 
Then the polynomials $\phi^{[k]}$ and $\psi^{[k]}$
defined by \eqref{def-phi0psi0}-\eqref{def-psik} are given by 
\begin{align}
\psi^{[k]}(z)&=(2ak+d)\big(z-(-1)^s\gamma (1-(-1)^k)\big)+bk+e, \label{psi-explicit}\\
\phi^{[k]}(z)&=a\big(z-(-1)^s\gamma (1-(-1)^k)\big)^2 +b\big(z-(-1)^s\gamma (1-(-1)^k)\big) +c+k(ak+d).  \label{phi-explicit} 
\end{align}
for each $k=0,1,2\ldots$.
\end{proposition}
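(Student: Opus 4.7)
The plan is induction on $k$. The base case $k=0$ is immediate: since $1-(-1)^0 = 0$, both right-hand sides in \eqref{psi-explicit}-\eqref{phi-explicit} collapse to $\psi(z)$ and $\phi(z)$, agreeing with \eqref{def-phi0psi0}.

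For the inductive step, I would introduce the shorthand
$$Y_k := z - (-1)^s\gamma\big(1-(-1)^k\big),$$
so that $Y_k = z$ for even $k$ and $Y_k = z - 2(-1)^s\gamma$ for odd $k$; in particular $Y_{k+2}=Y_k$. The engine of the induction consists of two identities I would establish first,
$$\mathrm{D}_s Y_k = 1, \qquad \mathrm{S}_s Y_k = Y_{k+1},$$
both of which follow from the basic computations $\mathrm{D}_s z = 1$, $\mathrm{S}_s z = z - 2(-1)^s\gamma$, $\mathrm{D}_s(-1)^s = 0$, $\mathrm{S}_s(-1)^s = -(-1)^s$. Combined with the product rules \eqref{def-Dxfg}-\eqref{def-Sxfg}, these immediately yield $\mathrm{D}_s Y_k^2 = 2Y_{k+1}$ and $\mathrm{S}_s Y_k^2 = Y_{k+1}^2 + 1$.

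With these tools the inductive step becomes pure linear bookkeeping. Rewriting the induction hypothesis as $\phi^{[k]} = aY_k^2 + bY_k + c + k(ak+d)$ and $\psi^{[k]} = (2ak+d)Y_k + bk + e$, I plug into \eqref{def-phik}-\eqref{def-psik} and collect terms in $Y_{k+1}$. The coefficients of $Y_{k+1}^2$ and $Y_{k+1}$ fall into place at once, while the constant term of $\phi^{[k+1]}$ is $c + k(ak+d) + a + (2ak+d)$, which I would verify factors as $c + (k+1)(a(k+1)+d)$; similarly the constant term of $\psi^{[k+1]}$ simplifies to $b(k+1) + e$ and the coefficient of $Y_{k+1}$ to $2a(k+1)+d$, matching the claim at index $k+1$.

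The main (mild) obstacle is conceptual rather than computational: for odd $k$ the object $Y_k$ depends on $(-1)^s$, so $\phi^{[k]}$ is not a polynomial in $z$ alone, and one must interpret $\mathrm{D}_s$ and $\mathrm{S}_s$ as acting on functions of the underlying index $s$ rather than only on polynomials in $z$. This is harmless, since the averaging definitions and the product rules of Lemma~\ref{propertyDxSx} extend verbatim to this wider class of functions, but it is worth flagging, as otherwise the key identity $\mathrm{S}_s Y_k = Y_{k+1}$ would be unjustified.
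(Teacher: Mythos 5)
Your proof is correct and follows essentially the same route as the paper: the four identities you isolate for $Y_k=z-(-1)^s\gamma(1-(-1)^k)$ (namely $\mathrm{D}_sY_k=1$, $\mathrm{S}_sY_k=Y_{k+1}$, $\mathrm{D}_sY_k^2=2Y_{k+1}$, $\mathrm{S}_sY_k^2=Y_{k+1}^2+1$) are exactly the displayed identities in the paper's argument, and the induction then proceeds by the same substitution into \eqref{def-phik}--\eqref{def-psik} and collection of coefficients. Your closing remark that $\mathrm{D}_s,\mathrm{S}_s$ must be understood as acting on functions of $s$ (since $Y_k$ involves $(-1)^s$ for odd $k$) is a point the paper leaves implicit, and is worth making.
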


\begin{proof}
It is obvious that with expressions \eqref{psi-explicit}-\eqref{phi-explicit}, we obtain
$\phi ^{[0]}(z)=\phi(z)$ and $\psi ^{[0]}(z)=\psi(z)$. Now taking into account the following identities
\begin{align*}
&\mathrm{D}_s \big(z-(-1)^s\gamma(1-(-1)^k)  \big)=1,\\[7pt]
&\mathrm{S}_s \big(z-(-1)^s\gamma(1-(-1)^k)  \big)=z-(-1)^s\gamma (1-(-1)^{k+1}),\\[7pt]
&\mathrm{D}_s \big(z-(-1)^s\gamma(1-(-1)^k)  \big)^2= 2\big(z-(-1)^s\gamma(1-(-1)^{k+1}) \big),\\[7pt]
&\mathrm{S}_s \big(z-(-1)^s\gamma(1-(-1)^k)  \big)^2=\Big(z-(-1)^s\gamma (1-(-1)^{k+1})\Big)^2+1,
\end{align*}
we obtain
\begin{align*}
\mathrm{S}_s\phi ^{[k]}(z)+\mathrm{D}_s\psi ^{[k]}(z)&=a\big(z-(-1)^s\gamma(1-(-1)^{k+1})  \big)^2 +c+a+k(ak+d)\\[7pt]
&\quad+b\big(z-(-1)^s\gamma (1-(-1)^{k+1})  \big) +(2ak+d)\\[7pt]
&=\phi ^{[k+1]}(z).
\end{align*}
Similarly, we also obtain
\begin{align*}
\mathrm{S}_s\psi ^{[k]}(z)+\mathrm{D}_s\phi ^{[k]}(z)&=(2a+2ak+d)\big(z-(-1)^s\gamma(1-(-1)^{k+1})  \big) +bk+e +b\\[7pt]
&=\psi ^{[k+1]}(z).
\end{align*}
Thus \eqref{psi-explicit}-\eqref{phi-explicit} follow.
\end{proof}

\begin{lemma}\label{lemmaA}
Let ${\bf u} \in \mathcal{P}^*$ and suppose that there exist $\phi\in\mathcal{P}_2$ and $\psi\in\mathcal{P}_1$ such that ${\bf u}$ satisfies \eqref{NUL-Pearson}. Then the relations
\begin{align}
\mathbf{D}_s{\bf u}^{[k+1]}
=-\psi^{[k]}{\bf u}^{[k]}, \quad
\mathbf{S}_s{\bf u}^{[k+1]}=-\phi^{[k]}{\bf u}^{[k]}, \label{lemmaA2}
\end{align}
hold for each $k=0,1,\ldots$.
\end{lemma}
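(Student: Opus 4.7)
The plan is to obtain both identities directly by applying ${\bf D}_s$ and ${\bf S}_s$ to the definition
\[
{\bf u}^{[k+1]} = \mathbf{D}_s\bigl(\psi^{[k]}{\bf u}^{[k]}\bigr) - \mathbf{S}_s\bigl(\phi^{[k]}{\bf u}^{[k]}\bigr),
\]
and then using the two structural relations among the operators: the commutativity ${\bf S}_s{\bf D}_s={\bf D}_s{\bf S}_s$ from \eqref{DxnSxf-DxnSxu}, and the ``Pythagorean'' identity ${\bf D}_s^2{\bf u}={\bf S}_s^2{\bf u}-{\bf u}$ from \eqref{def-Dx2u-Sx2u}. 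The functional equation \eqref{funct-eq-uk} established in Lemma \ref{Charact-theo}, namely ${\bf D}_s(\phi^{[k]}{\bf u}^{[k]})={\bf S}_s(\psi^{[k]}{\bf u}^{[k]})$, will be the crucial cancellation mechanism.

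For the first identity I apply ${\bf D}_s$ to the definition, getting
\[
{\bf D}_s{\bf u}^{[k+1]} = {\bf D}_s^2\bigl(\psi^{[k]}{\bf u}^{[k]}\bigr) - {\bf D}_s{\bf S}_s\bigl(\phi^{[k]}{\bf u}^{[k]}\bigr).
\]
Applying \eqref{def-Dx2u-Sx2u} to the first term and commuting via \eqref{DxnSxf-DxnSxu} in the second, this becomes
\[
{\bf S}_s\Bigl[{\bf S}_s\bigl(\psi^{[k]}{\bf u}^{[k]}\bigr) - {\bf D}_s\bigl(\phi^{[k]}{\bf u}^{[k]}\bigr)\Bigr] - \psi^{[k]}{\bf u}^{[k]}.
\]
By \eqref{funct-eq-uk} the bracketed expression vanishes, yielding ${\bf D}_s{\bf u}^{[k+1]}=-\psi^{[k]}{\bf u}^{[k]}$.

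For the second identity I apply ${\bf S}_s$ to the definition, obtaining
\[
{\bf S}_s{\bf u}^{[k+1]} = {\bf S}_s{\bf D}_s\bigl(\psi^{[k]}{\bf u}^{[k]}\bigr) - {\bf S}_s^2\bigl(\phi^{[k]}{\bf u}^{[k]}\bigr).
\]
Commuting in the first term and rewriting ${\bf S}_s^2=\mathbf{I}+{\bf D}_s^2$ in the second, I get
\[
{\bf D}_s\Bigl[{\bf S}_s\bigl(\psi^{[k]}{\bf u}^{[k]}\bigr) - {\bf D}_s\bigl(\phi^{[k]}{\bf u}^{[k]}\bigr)\Bigr] - \phi^{[k]}{\bf u}^{[k]},
\]
and the bracket vanishes by \eqref{funct-eq-uk}, giving ${\bf S}_s{\bf u}^{[k+1]}=-\phi^{[k]}{\bf u}^{[k]}$.

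There is no genuine obstacle here: the work is purely formal and the only thing to ensure is that \eqref{funct-eq-uk} may legitimately be invoked for every $k\geq 0$, which is precisely what Lemma \ref{Charact-theo} supplies. In particular no separate induction is needed in the present lemma, since each case $k$ uses only the already-proved functional equation at level $k$ together with the operator identities \eqref{DxnSxf-DxnSxu} and \eqref{def-Dx2u-Sx2u}.
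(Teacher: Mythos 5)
Your proof is correct and follows essentially the same route as the paper's: apply $\mathbf{D}_s$ and $\mathbf{S}_s$ to the definition of ${\bf u}^{[k+1]}$, then use the commutation identity \eqref{DxnSxf-DxnSxu}, the identity \eqref{def-Dx2u-Sx2u}, and the functional equation \eqref{funct-eq-uk} from Lemma \ref{Charact-theo} to cancel the second-order terms. The only cosmetic difference is that you factor the cancellation through the vanishing of $\mathbf{S}_s(\psi^{[k]}{\bf u}^{[k]})-\mathbf{D}_s(\phi^{[k]}{\bf u}^{[k]})$, while the paper substitutes \eqref{funct-eq-uk} directly so that the two iterated-operator terms cancel; these are the same argument.
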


\begin{proof}
Let $\phi\in\mathcal{P}_2$ and $\psi\in\mathcal{P}_1$ and ${\bf u}\in \mathcal{P}^*$ such the functional equation \eqref{NUL-Pearson} holds. Using successively \eqref{def-Dx2u-Sx2u}, {\eqref{DxnSxf-DxnSxu}} and \eqref{funct-eq-uk}, we obtain
\begin{align*}
{\bf D}_s {\bf u}^{[k+1]}&={\bf D}_s ^2\big(\psi^{[k]}{\bf u}^{[k]}  \big)-{\bf D}_s{\bf S}_s\big(\phi ^{[k]}{\bf u}^{[k]}  \big)\\[7pt]
&={\bf S}_s ^2\big(\psi^{[k]}{\bf u}^{[k]}  \big)-\psi^{[k]}{\bf u}^{[k]}-{\bf S}_s ^2\big(\psi ^{[k]}{\bf u}^{[k]}  \big)\\[7pt]
&=-\psi^{[k]}{\bf u}^{[k]}.
\end{align*}
In a similar way,
\begin{align*}
{\bf S}_s {\bf u}^{[k+1]}&={\bf S}_s{\bf D}_s\big(\psi^{[k]}{\bf u}^{[k]}  \big)-{\bf S}_s^2\big(\phi ^{[k]}{\bf u}^{[k]}  \big)\\[7pt]
&={\bf D}_s ^2\big(\phi^{[k]}{\bf u}^{[k]}  \big)-\phi^{[k]}{\bf u}^{[k]}-{\bf D}_s ^2\big(\phi ^{[k]}{\bf u}^{[k]}  \big)\\[7pt]
&=-\phi^{[k]}{\bf u}^{[k]},
\end{align*}
and the result follows.
\end{proof}

Here we prove a functional version of the Rodrigues formula on the bi-lattice \eqref{xs-def}.

\begin{theorem}[Rodrigues' formula]\label{uk-is-classical}
Consider the bi-lattice \eqref{xs-def}. Let ${\bf u}\in\mathcal{P}^*$ and suppose that there exists a admissible pair $(\phi,\psi)$ such that
${\bf u}$ fulfills \eqref{NUL-Pearson}.
Set
\begin{equation}\label{dnen-Prop}
d_n:=\mbox{$\frac{1}{2}\,$}\phi'' n+\psi',\quad
e_n:=\phi'(0)n+\psi(0).
\end{equation}
Then 
\begin{align}\label{roformula}
R_n {\bf u} = \mathbf{D}_s^n {\bf u}^{[n]},
\end{align}
where ${\bf u}^{[n]}$ is the functional on $\mathcal{P}$ defined by \eqref{uk-func-Dx} and $(R_n)_{n\geq0}$ is a simple set of polynomials given by
\begin{align}\label{Rn-Prop1} 
R_{n+1}(z)=(a_n z-s_n)R_{n}(z)- t_n R_{n-1} (z), 
\end{align}
with initial conditions $R_{-1}=0$ and $R_0=1$,
and $(a_n)_{n\geq0}$, $(s_n)_{n\geq0}$, and $(t_n)_{n\geq1}$
are sequences of complex numbers defined by
\begin{align}
&a_n:=-\frac{\,d_{2n}d_{2n-1}}{d_{n-1}}, \label{rn-Prop1}\\[7pt]
&s_n:=a_n \left( \frac{n e_{n-1}}{d_{2n-2}}
-\frac{(n+1) e_{n}}{d_{2n}} \right) , \label{sn-Prop1}\\[7pt]
&t_n:=a_n\,\frac{n d_{2n-2}}{d_{2n-1}}\phi^{[n-1]}\left((-1)^s\gamma (1+(-1)^n) -\frac{e_{n-1}}{d_{2n-2}}\right),\label{tn-Prop1}
\end{align}
$\phi^{[n-1]}$ being given by \eqref{phi-explicit}.
(It is understood that $a_0:=- d$ and $s_0:= e$.)
\end{theorem}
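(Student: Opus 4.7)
The plan is to establish $R_n\mathbf{u}=\mathbf{D}_s^n\mathbf{u}^{[n]}$ by strong induction on $n$, driven by the three-term recurrence \eqref{Rn-Prop1}. The base cases are direct: for $n=0$, $R_0=1$ and $\mathbf{u}^{[0]}=\mathbf{u}$; for $n=1$, the conventions $a_0=-d$, $s_0=e$ give $R_1(z)=-\psi(z)$, which matches $\mathbf{D}_s\mathbf{u}^{[1]}=-\psi\mathbf{u}$ from Lemma \ref{lemmaA}.

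For the inductive step, assume the identity holds at levels $n-1$ and $n$. In view of \eqref{Rn-Prop1} and the induction hypothesis, it suffices to prove
$$\mathbf{D}_s^{n+1}\mathbf{u}^{[n+1]}=(a_nz-s_n)\,\mathbf{D}_s^n\mathbf{u}^{[n]}-t_n\,\mathbf{D}_s^{n-1}\mathbf{u}^{[n-1]}.$$
The starting point is Lemma \ref{lemmaA}: from $\mathbf{D}_s\mathbf{u}^{[n+1]}=-\psi^{[n]}\mathbf{u}^{[n]}$ we have $\mathbf{D}_s^{n+1}\mathbf{u}^{[n+1]}=-\mathbf{D}_s^n(\psi^{[n]}\mathbf{u}^{[n]})$. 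Expanding via the Leibniz-type formula \eqref{leib-for-degree-pi-1} (with $\mathbf{u}^{[n]}$ in place of $\mathbf{u}$), together with the explicit form \eqref{psi-explicit} of $\psi^{[n]}$, produces
$$\mathbf{D}_s^{n+1}\mathbf{u}^{[n+1]}=-P_n(z)\,\mathbf{D}_s^n\mathbf{u}^{[n]}-n d_{2n}\,\mathbf{D}_s^{n-1}\mathbf{S}_s\mathbf{u}^{[n]}$$
for an affine expression $P_n(z)$ in $z$ (which may carry $(-1)^s\gamma$ terms from the Leibniz shifts).

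To eliminate the $\mathbf{S}_s$-term, use $\mathbf{S}_s\mathbf{u}^{[n]}=-\phi^{[n-1]}\mathbf{u}^{[n-1]}$ (Lemma \ref{lemmaA} with $k=n-1$) and perform the Euclidean division
$$\phi^{[n-1]}(z)=q(z)\,\psi^{[n-1]}(z)+\phi^{[n-1]}(w),\qquad w:=-\frac{e_{n-1}}{d_{2n-2}}+(-1)^s\gamma\bigl(1+(-1)^n\bigr),$$
where $w$ is the root of $\psi^{[n-1]}$ (which is of degree one in $z$ by \eqref{psi-explicit}). A direct computation from \eqref{phi-explicit} gives the crucial identity $\phi^{[n-1]}(w)=\phi(-e_{n-1}/d_{2n-2})+(n-1)d_{n-1}$, which is \emph{$s$-independent}. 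Combining this with $\psi^{[n-1]}\mathbf{u}^{[n-1]}=-\mathbf{D}_s\mathbf{u}^{[n]}$ and a further Leibniz expansion of $\mathbf{D}_s^{n-1}(q(z)\,\mathbf{D}_s\mathbf{u}^{[n]})$, one obtains a closed linear equation for $\mathbf{D}_s^{n-1}\mathbf{S}_s\mathbf{u}^{[n]}$ expressing it as a combination of $\mathbf{D}_s^n\mathbf{u}^{[n]}$ and $\mathbf{D}_s^{n-1}\mathbf{u}^{[n-1]}$; it is solvable precisely because the admissibility hypothesis forces $d_{n-1}\neq 0$, which is exactly the denominator that appears.

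Substituting back and invoking the induction hypothesis, one reads off the coefficients of $z\,\mathbf{D}_s^n\mathbf{u}^{[n]}$, $\mathbf{D}_s^n\mathbf{u}^{[n]}$, and $\mathbf{D}_s^{n-1}\mathbf{u}^{[n-1]}$ and identifies them with \eqref{rn-Prop1}, \eqref{sn-Prop1}, and \eqref{tn-Prop1}, respectively, closing the induction. The main obstacle will be the meticulous bookkeeping of the $(-1)^s\gamma$ contributions introduced both by the $\mathrm{S}_s$-shifts and by the Leibniz expansion; these have to conspire---thanks to the parity-compensating structure of $\phi^{[n-1]}$ and $\psi^{[n-1]}$ given by \eqref{psi-explicit}--\eqref{phi-explicit}, and particularly via the evaluation at the zero $w$---to cancel from the final coefficients and leave the stated $s$-independent sequences $(a_n)$, $(s_n)$, $(t_n)$.
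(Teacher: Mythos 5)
Your proposal is correct and shares the paper's overall skeleton (induction on $n$, the identity $\mathbf{D}_s^{n+1}{\bf u}^{[n+1]}=-\mathbf{D}_s^n(\psi^{[n]}{\bf u}^{[n]})$ from Lemma \ref{lemmaA}, and the Leibniz expansion \eqref{leib-for-degree-pi-1}), but your central elimination step is genuinely different from the paper's. The paper removes the unwanted term $\mathbf{D}_s^{n-1}\mathbf{S}_s{\bf u}^{[n]}$ by writing $\mathbf{S}_s{\bf u}^{[n]}=-\phi^{[n-1]}{\bf u}^{[n-1]}$ and expanding with Leibniz, which drags in the further unknowns $\mathbf{D}_s^{n-2}\mathbf{S}_s{\bf u}^{[n-1]}$ and $\mathbf{D}_s^{n-3}\mathbf{S}_s^2{\bf u}^{[n-1]}$; these are resolved by two auxiliary relations, \eqref{1b-lattice} and \eqref{1c-lattice}, the latter requiring the degree-two polynomial $\xi_2(\cdot;n)=\psi^{[n-1]}\mathrm{S}_s\psi^{[n]}+\phi^{[n-1]}\mathrm{D}_s\psi^{[n]}$ and the expansion of $\mathbf{D}_s^{n-1}(\xi_2{\bf u}^{[n-1]})$; the resulting linear system is solved with solvability governed by $\epsilon_n=-d_{n-1}/(nd_{2n}d_{2n-1})\neq0$. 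You instead divide $\phi^{[n-1]}$ by $\psi^{[n-1]}$ and use $\psi^{[n-1]}{\bf u}^{[n-1]}=-\mathbf{D}_s{\bf u}^{[n]}$ to fold the quotient term back into derivatives of ${\bf u}^{[n]}$, which (after \eqref{DxnSxf-DxnSxu}) yields the single linear equation $\bigl(1-\mathrm{T}_{n-1,1}q\bigr)\mathbf{D}_s^{n-1}\mathbf{S}_s{\bf u}^{[n]}=\bigl(\mathrm{T}_{n-1,0}q\bigr)\mathbf{D}_s^{n}{\bf u}^{[n]}-\phi^{[n-1]}(w)\,\mathbf{D}_s^{n-1}{\bf u}^{[n-1]}$, whose coefficient $1-(n-1)a/d_{2n-2}=d_{n-1}/d_{2n-2}$ is nonzero exactly by admissibility --- the same condition the paper meets through $\epsilon_n$, reached more directly. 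Your route dispenses with $\xi_2$, $\eta_2$ and the $\mathbf{S}_s^2$ term altogether (and with them the paper's slightly delicate division by $(n-1)(n-2)$ in \eqref{1c-lattice}); your evaluation $\phi^{[n-1]}(w)=\phi(-e_{n-1}/d_{2n-2})+(n-1)d_{n-1}$ at the root $w$ of $\psi^{[n-1]}$ is $s$-independent as you claim and is precisely the quantity in $t_n$, while the leading coefficient $-d_{2n}-nad_{2n}/d_{n-1}=-d_{2n}d_{2n-1}/d_{n-1}=a_n$ also checks out. What remains in both arguments is the same bookkeeping of the $(-1)^s\gamma$ shifts through the $\mathrm{T}_{n,k}$ coefficients (note that $q$ has an $s$-dependent constant term, so \eqref{leib-for-degree-pi-1} must be applied in the form the paper actually proves for such coefficients), which you rightly identify as the main labor.
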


\begin{proof}
We apply mathematical induction on $n$.
If $n=0$, (\ref{roformula}) is trivial.
If $n=1$, (\ref{roformula}) follows from the first equation in \eqref{lemmaA2} with $R_1=-\psi$.
Assume now (induction hypothesis) that (\ref{roformula}) holds for two consecutive nonnegative integer numbers,
i.e., the relations
\begin{align}\label{InducHyp}
R_{n-1} {\bf u}=\mathbf{D}_s^{n-1} {\bf u}^{[n-1]} ,\quad
R_n {\bf u}=\mathbf{D}_s^n {\bf u}^{[n]}
\end{align}
hold for some fixed $n\in\mathbb{N}$.
We need to prove that $R_{n+1}{\bf u}=\mathbf{D}_s^{n+1} {\bf u}^{[n+1]}$.
Notice first that, by \eqref{psi-explicit} and \eqref{dnen-Prop}, we have
\begin{equation}\label{psid2n}
\psi^{[n]}(z)=d_{2n}\big(z-(-1)^s\gamma (1-(-1)^n) \big)+e_n.
\end{equation}
By \eqref{lemmaA2} and the Leibniz formula in Proposition \ref{Leibniz-rule-NUL}, we may write
\begin{align*}
\mathbf{D}_s^{n+1}{\bf u}^{[n+1]}
&= \mathbf{D}_s^n \mathbf{D}_s {\bf u}^{[n+1]} = - \mathbf{D}_s^n (\psi^{[n]} {\bf u}^{[n]}) \\[7pt]
&= - \mathrm{T}_{n,0} \psi ^{[n]}  \mathbf{D}_s^n {\bf u}^{[n]}
- \mathrm{T}_{n,1}\psi^{[n]}\mathbf{D}_s^{n-1} \mathbf{S}_s {\bf u}^{[n]}.
\end{align*}
From \eqref{leib-for-degree-pi-1} we have $\mathrm{T}_{n,1}\psi^{[n]}=nd_{2n}$,
and so, using also \eqref{InducHyp},
\begin{align}
\mathbf{D}_s^{n-1} \mathbf{S}_s {\bf u}^{[n]} =-\frac{1}{n d_{2n}}
\left(\mathbf{D}_s ^{n+1} {\bf u}^{[n+1]}
+  \big(\mathrm{T}_{n,0}\psi ^{[n]}\big) R_n {\bf u} \right). \label{1a-lattice}
\end{align}
Shifting $n$ into $n-1$, and using again the induction hypothesis \eqref{InducHyp}, we obtain
\begin{align}
\mathbf{D}_s^{n-2} \mathbf{S}_s {\bf u}^{[n-1]} =-\frac{1}{(n-1)d_{2n-2}}
\left(R_n+ \big(\mathrm{T}_{n-1,0}\psi^{[n-1]}\big)\,R_{n-1}\right){\bf u}. \label{1b-lattice}
\end{align}
Next, using \eqref{def-Dxfu} and \eqref{lemmaA2}, we deduce
\begin{align}
\mathbf{D}_s^{n+1}{\bf u}^{[n+1]} &= - \mathbf{D}_s^n \big(\psi^{[n]}{\bf u}^{[n]}\big)
=- \mathbf{D}_s^{n-1}\big( \mathbf{D}_s(\psi^{[n]} {\bf u}^{[n]})\big) \label{Eq-xi1}\\[7pt]
&=-\mathbf{D}_s^{n-1} \Big(  \mathrm{S}_s \psi ^{[n]}
\mathbf{D}_s{\bf u}^{[n]} +\mathrm{D}_s\psi ^{[n]}\mathbf{S}_s {\bf u}^{[n]} \Big) \nonumber \\[7pt]
&=\mathbf{D}_s^{n-1} \big( \xi_2(\cdot;n) {\bf u}^{[n-1]}  \big), \nonumber
\end{align}
where $\xi_2(\cdot;n)$ is a polynomial of degree $2$, given by
\begin{align}\label{xi-definition}
\xi_2 (z;n)=\psi ^{[n-1]}(z)\mathrm{S}_s \psi ^{[n]}(z) +\phi ^{[n-1]}(z)\mathrm{D}_s \psi ^{[n]}(z).
\end{align}
We deduce
\begin{align}\label{xi-2zn}
\xi_2 (z;n) &= d_{2n}d_{2n-1}\Big(z-(-1)^s\gamma \big(1+(-1)^n \big)  \Big)^2 +2e_nd_{2n-1}\Big(z-(-1)^s\gamma \big(1+(-1)^n \big)  \Big) \\
&\quad  +e_ne_{n-1} +d_{2n}(c+(n-1)d_{n-1}).\nonumber
\end{align}
Since $\deg\xi_2(\cdot;n)=2$, using again Proposition \ref{Leibniz-rule-NUL}, we may write
\begin{align}
\mathbf{D}_s^{n-1}\big(\xi_2(\cdot;n){\bf u}^{[n-1]}\big)
&= \mathrm{T}_{n-1,0}\xi_2(\cdot;n) \mathbf{D}_s^{n-1}{\bf u}^{[n-1]}
+\mathrm{T}_{n-1,1}\xi_2(\cdot;n)\mathbf{D}_s^{n-2}\mathrm{S}_s{\bf u}^{[n-1]} \label{Eqq-ss} \\[7pt]
&\quad+\mathrm{T}_{n-1,2}\xi_2(\cdot;n) \mathbf{D}_s^{n-3}\mathbf{S}_s^2{\bf u}^{[n-1]}. \nonumber
\end{align}
Since, by \eqref{leib-for-degree-pi-1},
$\mathrm{T}_{n-1,2}\xi_2(\cdot;n)= (n-1)(n-2)d_{2n} d_{2n-1}$,
combining equations \eqref{Eqq-ss}, \eqref{Eq-xi1}, \eqref{1b-lattice}, and \eqref{InducHyp}, we obtain
\begin{align}
\mathbf{D}_s^{n-3} \mathbf{S}_s^2{\bf u}^{[n-1]}
&=\frac{1}{(n-1)(n-2)d_{2n}d_{2n-1}}
\left( \mathbf{D}_s^{n+1}{\bf u}^{[n+1]}-\big(\mathrm{T}_{n-1,0}\xi_2(\cdot;n)\big)R_{n-1}{\bf u} \right.\label{1c-lattice} \\[7pt]
&\quad + \left. \frac{ \mathrm{T}_{n-1,1}\xi_2(\cdot;n)}{(n-1) d_{2n-2}} \Big(R_n  +  \big(\mathrm{T}_{n-1,0}\psi^{[n-1]}\big)R_{n-1} \Big){\bf u} \right). \nonumber
\end{align}
On the other hand, by (\ref{lemmaA2}),
\begin{align}\label{eta-definition}
\mathbf{S}_s {\bf u}^{[n]}=\eta_2(\cdot;n){\bf u}^{[n-1]},\quad
\eta_2(z;n):=-\phi^{[n-1]}(z).
\end{align}
Taking into account that $\eta_2(\cdot;n)$ is a polynomial of degree at most two, by the Leibniz formula and \eqref{InducHyp}, we may write
\begin{align}
\mathbf{D}_s^{n-1} \mathbf{S}_s {\bf u}^{[n]}
&=\mathbf{D}_s^{n-1} \big( \eta_2 (\cdot;n){\bf u}^{[n-1]} \big) \label{EqBoa}\\[7pt]
&= \big(\mathrm{T}_{n-1,0}\eta_2(\cdot;n)\big)R_{n-1}{\bf u}
+\mathrm{T}_{n-1,1}\eta_2(\cdot;n)\mathbf{D}_s^{n-2} \mathbf{S}_s{\bf u}^{[n-1]} \nonumber \\[7pt]
&\quad +\mathrm{T}_{n-1,2}\eta_2(\cdot;n)\mathbf{D}_s^{n-3} \mathbf{S}_s^2{\bf u}^{[n-1]}. \nonumber
\end{align}
Note that $\eta_2(\cdot;n)$ is given explicitly by
\begin{align}\label{eta-2zn}
\eta_2 (z;n)&=-a\Big(z-(-1)^s\gamma \big(1+(-1)^n \big)  \Big)^2 -b\Big(z-(-1)^s\gamma \big(1+(-1)^n \big)  \Big) -c-(n-1)d_{n-1}
\end{align}
Hence, using \eqref{leib-for-degree-pi-1},
$\mathrm{T}_{n-1,2}\eta_2(\cdot;n)=-a(n-1)(n-2)$.
Therefore, substituting \eqref{1a-lattice}, \eqref{1b-lattice}, and \eqref{1c-lattice} in \eqref{EqBoa}, we obtain
\begin{equation}\label{Dn1HypInd}
\mathbf{D}_s^{n+1} {\bf u}^{[n+1]}
=\big(A(\cdot;n) R_n+ B(\cdot;n)R_{n-1}\big){\bf u},
\end{equation}
where $A(\cdot;n)$ and $B(\cdot;n)$ are polynomials depending on $n$, given by
\begin{align}\label{Azn-initial}
\epsilon_n A(z;n) = \frac{ \big(\mathrm{T}_{n,0}\psi ^{[n]}\big)(z)}{n d_{2n}} -\frac{ \big(\mathrm{T}_{n-1,1} \eta_2 \big)(z;n)}{(n-1) d_{2n-2}} -\frac{ a\big(\mathrm{T}_{n-1,1}\xi_2\big)(z;n)}{(n-1) d_{2n}d_{2n-1} d_{2n-2}} 
\end{align}
and
\begin{align}\label{Bzn-initial}
\epsilon_n B(z;n) &= \big(\mathrm{T}_{n-1,0}\eta_2\big)(z;n) -\frac{ \big(\mathrm{T}_{n-1,0}\psi ^{[n-1]}\big)(z)\big(\mathrm{T}_{n-1,1}\eta_2\big)(z;n)}{ (n-1) d_{2n-2}} \\[7pt]
&\quad + a\frac{ \big(\mathrm{T}_{n-1,0}\xi_2\big)(z;n)}{ d_{2n} d_{2n-1}} 
 -a \frac{ \big(\mathrm{T}_{n-1,1}\xi_2\big)(z;n)\big(\mathrm{T}_{n-1,0}\psi ^{[n-1]}\big)(z)}{(n-1) d_{2n} d_{2n-1} d_{2n-2}},\nonumber
\end{align}
where
\begin{align*}
\epsilon_n :=\frac{a}{ d_{2n}d_{2n-1}} -\frac{1}{n d_{2n}}=-\frac{d_{n-1}}{n d_{2n} d_{2n-1}}.
\end{align*}
We claim that
\begin{align}
A(z;n)&=- \frac{d_{2n}d_{2n-1}}{d_{n-1}}z +\frac{n d_{2n}d_{2n-1}e_{n-1}}{d_{2n-2}d_{n-1}} 
-\frac{(n+1) d_{2n-1}e_{n}}{d_{n-1}}, \label{Azn-final} \\
&=a_n z-s_n,\nonumber \\
B(z;n)&=  \frac{n d_{2n}d_{2n-2}}{d_{n-1}} \phi ^{[n-1]} \left((-1)^s\gamma \big(1+(-1)^n\big)-\frac{e_{n-1}}{d_{2n-2}} \right) =-t_n, \label{Bzn-final}
\end{align}
where $a_n$, $s_n$, and $t_n$ are given by \eqref{rn-Prop1}-\eqref{tn-Prop1}. 
Indeed, 
\begin{align}
&a= d_{2n-1}- d_{2n-2}, \label{an-app}\\[7pt]
&b=e_n-e_{n-1} ,\label{bn-app}\\[7pt]
&d_{2n}-2 d_{2n-1}+d_{2n-2}=0.\label{dn-app}
\end{align}
By \eqref{xi-2zn}, \eqref{eta-2zn}, and \eqref{psid2n}, and applying \eqref{leibnizfor-degree-pi-2}-\eqref{leib-for-degree-pi-1} together with \eqref{bn-app}, we obtain
\begin{align}
(\mathrm{T}_{n,0}\psi ^{[n]})(z) &=d_{2n}z +e_n \label{Tn-psi-n,0}   ,\\
(\mathrm{T}_{n-1,1} \xi_2 )(z;n)&=2(n-1)d_{2n-1}(d_{2n} z+e_n)-(-1)^s\gamma d_{2n}d_{2n-1}\big(2n-1 -(-1)^n\big), \label{Tn-xi-n-1,1} \\
(\mathrm{T}_{n-1,1}\eta_2 )(z;n)&=-(n-1)(2az+b) +a(-1)^s\gamma \big(2n-1 -(-1)^n\big).   \label{Tn-eta-n-1,1}  
\end{align}
Similarly,
\begin{align}
(\mathrm{T}_{n-1,0}\eta_2)(z;n)&= -az^2-bz-c-(n-1)d_n   \nonumber \\[7pt]
&=\eta_2\big(z+(-1)^s\gamma (1+(-1)^n);n  \big) -a(n-1) \label{Tn-eta-n-1,0} \\[7pt]
(\mathrm{T}_{n-1,0}\xi_2)(z;n)&=d_{2n}d_{2n-1}z^2 +2e_nd_{2n-1}z  \nonumber\\[7pt]
&\quad +e_ne_{n-1}+d_{2n}\big(  c+(n-1)(d_{n-1}+d_{2n-1})\big)    \nonumber \\[7pt]
&= \xi_2 \big(z+(-1)^s\gamma (1+(-1)^n);n\big)+(n-1)d_{2n}d_{2n-1} . \label{Tn-xi-n-1,0}     
\end{align}
Using \eqref{Tn-psi-n,0}, \eqref{Tn-xi-n-1,1}, and \eqref{Tn-eta-n-1,1}, we deduce from \eqref{Azn-initial} that using $b=e_n -e_{n-1}$ that
\begin{align*}
\epsilon_n A(z;n)&=\frac{z}{n}  +\frac{e_{n}}{nd_{2n}}+ \frac{b}{d_{2n-2}} -\frac{2a e_n }{d_{2n}d_{2n-2}},\\[7pt]
&=\frac{1}{n}z -\frac{e_{n-1}}{d_{2n-2}}+ \frac{(n+1) e_n}{n d_{2n}}
\end{align*}
($n=0,1,\ldots$). This gives \eqref{Azn-final}.
From \eqref{Tn-psi-n,0}-\eqref{Tn-xi-n-1,0} it is straightforward to verify that \eqref{Bzn-initial} reduces to 
\begin{align*}
\epsilon_n B(z;n)&=-(c+(n-1)d_{n-1})\big(1-\frac{a}{d_{2n-1}}  \big) +b\frac{e_{n-1}}{d_{2n-2}} +a\frac{e_ne_{n-1}}{d_{2n}d_{2n-1}}-2a\frac{e_ne_{n-1}}{d_{2n}d_{2n-2}}.
\end{align*} 
Moreover, from the definitions of $\xi(.;n)$ and $\eta_2(.;n)$ given in \eqref{xi-definition} and \eqref{eta-definition}, we use \eqref{an-app}-\eqref{dn-app} to obtain
$$
\epsilon_nB(z;n)=-\frac{d_{2n-2}}{d_{2n-1}}\Big(a\frac{e_{n-1} ^2}{d_{2n-2} ^2}-b\frac{e_{n-1}}{d_{2n-2}}+c+(n-1)d_{n-1}  \Big) .
$$
Therefore, using successively \eqref{an-app} and \eqref{bn-app}, we obtain
\begin{align*}
B(z;n)&= \frac{nd_{2n}d_{2n-2}}{d_{n-1}}\Big(a\frac{e_{n-1} ^2}{d_{2n-2} ^2}-b\frac{e_{n-1}}{d_{2n-2}}+c+(n-1)d_{n-1}    \Big)\\[7pt]
&=\frac{n d_{2n}d_{2n-2}}{d_{n-1}}\phi^{[n-1]}\left((-1)^s\gamma \big(1+(-1)^n\big)-\frac{e_{n-1}}{d_{2n-2}} \right),
\end{align*}
and \eqref{Bzn-final} is proved.
It follows from \eqref{Azn-final} and \eqref{Bzn-final} that  \eqref{Dn1HypInd} reduces to $\mathbf{D}_s^{n+1} {\bf u}^{[n+1]}=R_{n+1}{\bf u}$,
which completes the proof.
\end{proof}

\begin{lemma}\label{phi-psi-not-zero}
Let ${\bf u}\in\mathcal{P}^*$ be regular. Suppose that there is 
$(\phi,\psi)\in\mathcal{P}_2\times\mathcal{P}_1 \setminus \{(0,0)\}$ so that \eqref{NUL-Pearson} holds. 
Then neither $\phi$ nor $\psi$ is the zero polynomial, and $\deg\psi=1$.
\end{lemma}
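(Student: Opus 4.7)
The plan is to argue by contradiction, ruling out separately the cases $\phi\equiv 0$, $\psi\equiv 0$, and $\deg\psi=0$. The underlying ingredient from the regularity of $\bf u$ is that any polynomial $q\in\mathcal{P}$ satisfying $q{\bf u}=0$ must vanish: writing $q(z)=\sum_{k=0}^d q_k z^k$ with $u_n=\langle{\bf u},z^n\rangle$, the conditions $\sum_k q_k u_{n+k}=0$ for $n=0,\ldots,d$ form a linear system whose matrix is the Hankel matrix $(u_{i+j})_{0\le i,j\le d}$, nonsingular by regularity.

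For $\deg\psi=0$ with $\psi\equiv e$, the identity \eqref{def-Sxfu} yields $\mathbf{S}_s(e{\bf u})=e\mathbf{S}_s{\bf u}$, and pairing $\mathbf{D}_s(\phi{\bf u})=e\mathbf{S}_s{\bf u}$ with $f=1$ (using $\mathrm{D}_s1=0$ and $\mathrm{S}_s1=1$) gives $0=eu_0$, so $e=0$ since $u_0\neq 0$. For $\phi\equiv 0$ (hence $\psi\neq 0$ and $\mathbf{S}_s(\psi{\bf u})=0$), I apply $\mathbf{S}_s$ once more and invoke \eqref{def-Dx2u-Sx2u} to get $0=\mathbf{S}_s^2(\psi{\bf u})=\mathbf{D}_s^2(\psi{\bf u})+\psi{\bf u}$, whence $\psi{\bf u}=-\mathbf{D}_s^2(\psi{\bf u})$. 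Iterating yields $\psi{\bf u}=(-1)^k\mathbf{D}_s^{2k}(\psi{\bf u})$ for every $k\ge 0$; pairing with any $f\in\mathcal{P}$ and choosing $2k>\deg f$ gives $\langle\psi{\bf u},f\rangle=(-1)^k\langle\psi{\bf u},\mathrm{D}_s^{2k}f\rangle=0$, since by \eqref{Dx-xn} the operator $\mathrm{D}_s$ strictly lowers the $z$-degree (the $(-1)^s$-dependent coefficients only occur at strictly lower order). Hence $\psi{\bf u}=0$, and the regularity fact forces $\psi=0$, a contradiction.

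The case $\psi\equiv 0$, so $\phi\neq 0$ and $\mathbf{D}_s(\phi{\bf u})=0$, is the subtle one: the symmetric argument with $\mathrm{S}_s$ in place of $\mathrm{D}_s$ is degenerate since $\mathrm{S}_s$ preserves the $z$-degree. I would expand the equation by \eqref{def-Dxfu},
\[
0 = \mathrm{S}_s\phi\cdot\mathbf{D}_s{\bf u} + \mathrm{D}_s\phi\cdot\mathbf{S}_s{\bf u},
\]
substitute the explicit forms $\mathrm{S}_s\phi = \phi + a(1+4\gamma^2) - 2(-1)^s\gamma\phi'$ and $\mathrm{D}_s\phi = \phi' - 2(-1)^s\gamma\phi''$, and separate the $(-1)^s$-free from the $(-1)^s$-proportional contributions. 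This yields the two functional identities $[\phi+a(1+4\gamma^2)]\mathbf{D}_s{\bf u}+\phi'\mathbf{S}_s{\bf u}=0$ and $\phi'\mathbf{D}_s{\bf u}+2a\mathbf{S}_s{\bf u}=0$. When $a\neq 0$, eliminating $\mathbf{S}_s{\bf u}$ between them produces $q(z)\mathbf{D}_s{\bf u}=0$ for a specific nonzero $q\in\mathcal{P}_2$, and combining with the second identity eventually forces $\mathbf{S}_s{\bf u}=0$; pairing with $f=1$ then gives $u_0=0$, the required contradiction. The degenerate subcases $a=0$ and then $b=0$ are handled by descending through the corresponding lower-degree versions of the same argument.

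The main obstacle lies in this last step: justifying the parity separation of the Pearson equation at the level of $\mathcal{P}^*$ and converting the resulting pair of identities into an outright annihilation of $\mathbf{S}_s{\bf u}$. The apparent symmetry between $\mathbf{D}_s$ and $\mathbf{S}_s$ is broken by the fact that $\mathrm{D}_s$ is degree-lowering while $\mathrm{S}_s$ is degree-preserving, and this asymmetry is what makes the two halves of the lemma into genuinely different arguments.
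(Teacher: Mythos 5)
The paper does not actually prove Lemma \ref{phi-psi-not-zero}: it is imported from \cite{KCDMJP2022}, and within this paper the statement is in effect a by-product of Lemma \ref{x-admissible} together with the computation of $C_1^{[n]}$ in Section \ref{th1}. So your attempt can only be judged on its own terms. Your regularity ingredient ($q{\bf u}=0$ with ${\bf u}$ regular forces $q=0$), your disposal of $\deg\psi=0$ (pairing with $f=1$ to get $eu_0=0$), and your disposal of $\phi\equiv 0$ (iterating $\psi{\bf u}=-\mathbf{D}_s^2(\psi{\bf u})$ via \eqref{def-Dx2u-Sx2u} and using that $\mathrm{D}_s$ strictly lowers the degree, cf. \eqref{Dx-xn}) are all correct, and the last of these is a genuinely nice self-contained argument.

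The case $\psi\equiv 0$, however, is not proved, and it carries most of the weight of the lemma (it is what ultimately yields $\deg\psi=1$). Two concrete problems. First, the parity separation of $0=\mathrm{S}_s\phi\,\mathbf{D}_s{\bf u}+\mathrm{D}_s\phi\,\mathbf{S}_s{\bf u}$ into a $(-1)^s$-free identity and a $(-1)^s$-proportional identity is vacuous when $\gamma=0$ (the linear lattice, which the lemma must cover), and for $\gamma\neq 0$ it is not justified at the level of $\mathcal{P}^*$: the coefficients $\mathrm{S}_s\phi$ and $\mathrm{D}_s\phi$ are not elements of $\mathcal{P}$, so one must first say in what space the identity lives and why the components of $1$ and $(-1)^s$ can be equated separately. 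Second, even granting the two identities, the elimination only gives $q\,\mathbf{D}_s{\bf u}=0$ for some nonzero $q\in\mathcal{P}_2$; since $\mathbf{D}_s{\bf u}$ is not known to be regular, this does not force $\mathbf{D}_s{\bf u}=0$, and the asserted consequence $\mathbf{S}_s{\bf u}=0$ does not follow --- indeed, pairing $\mathbf{S}_s{\bf u}=-\phi'\,\mathbf{D}_s{\bf u}/(2a)$ with $f=1$ yields the tautology $u_0=u_0$ rather than $u_0=0$. The phrase ``eventually forces'' is precisely where the missing proof sits. One way to close it (for $a\neq0$, $\gamma\neq0$) is to note that then also $q\,\mathbf{S}_s{\bf u}=0$, apply \eqref{def-fDxu}--\eqref{def-fSxu} and \eqref{def-Dx2u-Sx2u} to conclude $(\mathrm{S}_sq)\,{\bf u}=0=(\mathrm{D}_sq)\,{\bf u}$, and eliminate $(-1)^s$ between these two relations to produce a nonzero element of $\mathcal{P}_2$ annihilating ${\bf u}$, contradicting regularity; but the subcases $a=0$, $b=0$ and above all $\gamma=0$ still need separate treatment, and none of this appears in your text. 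As written, the argument establishes only two of the three assertions of the lemma.
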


\begin{lemma}\label{x-admissible}
Let ${\bf u}\in\mathcal{P}^*$. Suppose that ${\bf u}$ is regular and satisfies $(\ref{NUL-Pearson})$, 
where $(\phi,\psi)\in\mathcal{P}_2\times\mathcal{P}_1 \setminus \{(0,0)\}$.
Then $(\phi,\psi)$ is a admissible pair and ${\bf u}^{[k]}$ is regular for each $k=1,2,\ldots$.
Moreover, if $(P_n)_{n\geq0}$ is the monic OPS with respect to ${\bf u}$, then
$\big(P_n^{[k]}\big)_{n\geq0}$ is the monic OPS with respect to ${\bf u}^{[k]}$.
\end{lemma}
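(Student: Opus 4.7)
My plan is to derive all three conclusions from a single key identity together with Lemmas~\ref{lemmaA} and \ref{Charact-theo}, and then iterate by induction on $k$. The identity I aim to establish is that for every $g\in\mathcal{P}$ and every $n\geq 0$,
\begin{equation*}
(n+1)\,\langle {\bf u}^{[1]},\,g\,P_n^{[1]}\rangle
=\bigl\langle {\bf u},\,P_{n+1}\,\bigl(\phi\,\mathrm{D}_s g+\psi\,\mathrm{S}_s g\bigr)\bigr\rangle.
\end{equation*}
To prove it, I would use \eqref{def-fDxg} to rewrite $g\,\mathrm{D}_s P_{n+1}=\mathrm{D}_s(P_{n+1}\,\mathrm{S}_s g)-\mathrm{S}_s(P_{n+1}\,\mathrm{D}_s g)$, transfer the operators $\mathrm{D}_s,\mathrm{S}_s$ onto ${\bf u}^{[1]}$ by duality, and apply Lemma~\ref{lemmaA} with $k=0$, namely $\mathbf{D}_s{\bf u}^{[1]}=-\psi\,{\bf u}$ and $\mathbf{S}_s{\bf u}^{[1]}=-\phi\,{\bf u}$; the definition $P_n^{[1]}=\mathrm{D}_s P_{n+1}/(n+1)$ then closes the calculation.

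Two corollaries flow immediately. When $\deg g<n$, the polynomial $\phi\,\mathrm{D}_s g+\psi\,\mathrm{S}_s g$ has degree at most $n$, so orthogonality of $P_{n+1}$ with respect to ${\bf u}$ forces the right-hand side to vanish, whence $P_n^{[1]}$ is orthogonal to $\mathcal{P}_{n-1}$ against ${\bf u}^{[1]}$. Taking $g=P_n^{[1]}$ and extracting the coefficient of $z^{n+1}$ in $\phi\,\mathrm{D}_s P_n^{[1]}+\psi\,\mathrm{S}_s P_n^{[1]}$ (which equals $an+d=d_n$) gives the norm formula
\begin{equation*}
(n+1)\,\bigl\langle {\bf u}^{[1]},(P_n^{[1]})^2\bigr\rangle
=d_n\,\bigl\langle {\bf u},P_{n+1}^2\bigr\rangle.
\end{equation*}

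For admissibility, Lemma~\ref{phi-psi-not-zero} already gives $d_0=d\neq 0$, and if $a=0$ then $d_n=d$ for all $n$ and we are done. The main obstacle is the case $a\neq 0$: if $d_N=aN+d=0$ for some $N\geq 1$, I would pair the Pearson equation \eqref{NUL-PearsonMainThm1} successively against the monomials $z^0,z^1,\ldots,z^{N+1}$ to generate linear relations on the moments $u_n:=\langle {\bf u},z^n\rangle$; the coefficient of the leading moment at step $N$ is exactly $d_N$, and its vanishing produces a nontrivial linear dependence among $u_0,\ldots,u_{N+1}$, forcing the Hankel determinant $\det(u_{i+j})_{i,j=0}^{N+1}$ to vanish, which contradicts regularity of ${\bf u}$. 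This step is delicate because the $(-1)^s$ corrections in \eqref{Dx-xn}-\eqref{Sx-xn} enter the moment recursion and must be tracked carefully.

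Once admissibility is secured, the identity and the norm formula show that $(P_n^{[1]})_{n\geq 0}$ is the monic OPS for ${\bf u}^{[1]}$, which is therefore regular. For general $k$ I would induct on $k$: by Lemma~\ref{Charact-theo}, ${\bf u}^{[k-1]}$ satisfies a Pearson equation with data $(\phi^{[k-1]},\psi^{[k-1]})$, whose admissibility constant, computed from \eqref{psi-explicit}-\eqref{phi-explicit}, equals $d_{n+2(k-1)}$ and hence is still nonzero. Applying the $k=1$ conclusion to ${\bf u}^{[k-1]}$ shows that ${\bf u}^{[k]}$ is regular with monic OPS whose $n$-th element is $\mathrm{D}_s P_{n+1}^{[k-1]}/(n+1)$, which by definition \eqref{Pnkx} is precisely $P_n^{[k]}$, closing the induction.
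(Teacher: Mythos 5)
The paper does not actually prove this lemma; it is imported from \cite{KCDMJP2022}, so your proposal can only be judged on its own terms. Its skeleton is sound: the duality identity $(n+1)\langle {\bf u}^{[1]},g\,P_n^{[1]}\rangle=\langle {\bf u},P_{n+1}(\phi\,\mathrm{D}_s g+\psi\,\mathrm{S}_s g)\rangle$ does follow from \eqref{def-fDxg} and Lemma~\ref{lemmaA}, it yields both the quasi-orthogonality of $P_n^{[1]}$ and the norm relation $(n+1)\langle {\bf u}^{[1]},(P_n^{[1]})^2\rangle=d_n\langle {\bf u},P_{n+1}^2\rangle$, and the induction on $k$ via Lemma~\ref{Charact-theo} together with the computation $\tfrac{n}{2}(\phi^{[k-1]})''+(\psi^{[k-1]})'=d_{n+2(k-1)}$ is correct.

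The genuine gap is the admissibility step, which you yourself flag as delicate. Pairing \eqref{NUL-Pearson} with $z^0,\dots,z^N$ when $d_N=0$ produces at most one scalar relation $\sum_{j=0}^N c_j u_j=0$ among the moments, and such a relation does not force any Hankel determinant to vanish: every regular functional satisfies infinitely many nontrivial relations of exactly this type (e.g.\ $\langle {\bf u},P_N\rangle=0$ is a relation among $u_0,\dots,u_N$) while all of its Hankel determinants are nonzero. Vanishing of $\det(u_{i+j})_{i,j=0}^{N+1}$ would require a single vector $(c_j)$ with $\sum_j c_j u_{i+j}=0$ for \emph{every} $i=0,\dots,N+1$, which is far stronger than what the moment recursion \eqref{momentseq} gives; indeed, when $d_N=0$ that recursion leaves $u_{N+1}$ entirely free, so the moments alone cannot produce the desired contradiction. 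The repair is already contained in your own identity: taking $g=P_m^{[1]}$ for all $m$ shows that $d_N=0$ forces $\langle {\bf u}^{[1]},P_m^{[1]}P_N^{[1]}\rangle=0$ for every $m$ (for $m\neq N$ by degree count on whichever side has the larger index, for $m=N$ because $d_N=0$), hence $P_N^{[1]}{\bf u}^{[1]}={\bf 0}$ since $(P_m^{[1]})_{m\geq0}$ is a simple set. Applying $\mathbf{S}_s$ via \eqref{def-Sxfu} and Lemma~\ref{lemmaA} converts this into $\bigl(\phi\,\mathrm{S}_s P_N^{[1]}+\psi\,\mathrm{D}_s P_N^{[1]}\bigr){\bf u}={\bf 0}$, a nonzero polynomial (its leading coefficient is $a$, and the case $a=0$ is already settled since then $d_n=d\neq0$ by Lemma~\ref{phi-psi-not-zero}) annihilating a regular functional --- the actual contradiction. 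As written, your admissibility argument would fail, and everything downstream (regularity of ${\bf u}^{[1]}$ and the induction on $k$) depends on it.
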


\section{Proof of Theorem \ref{main-Thm1}}\label{S-main}\label{th1}
Here we reproduce the ideas developed in \cite{KCDMJP2022}. Suppose that ${\bf u}$ is regular. By Lemma \ref{x-admissible}, $(\phi,\psi)$ is a admissible pair, meaning that the first condition in \eqref{le1a} holds, and
$\big(P_j^{[n]}\big)_{j\geq0}$ is the monic OPS with respect to ${\bf u}^{[n]}$, for each fixed $n$.
Write 
\begin{equation}\label{Dx-ttrrC1}
P_{j+1}^{[n]}(z)=(z-B_j^{[n]})P_j^{[n]}(z)-C_j^{[n]}P_{j-1}^{[n]}(z)\quad  (j=0,1,\ldots)
\end{equation}
($P_{-1}^{[n]}(z):=0$), being $B_{j}^{[n]}\in\mathbb{C}$
and $C_{j+1}^{[n]}\in\mathbb{C}\setminus\{0\}$ for each $j=0,1,\ldots$.
Let us compute $C_{1}^{[n]}$. 
We first show that (for $n=0$) the coefficient $C_1\equiv C_{1}^{[0]}$,
appearing in the recurrence relation for $(P_j)_{j\geq0}$,
is given by
\begin{equation}\label{Dx-g1}
C_1=-\frac{1}{d +a}\,\phi\left(-\frac{e}{d}\right) =-\frac{1}{d_1}\,\phi\left( -\frac{e_0}{d_0}\right)\; .
\end{equation}
Indeed, taking $k=0$ and $k=1$ in the relation
$\langle{\bf D}_{x}(\phi{\bf u}),z^k\rangle=\langle {\bf S}_s(\psi{\bf u}),z^k\rangle$,
we obtain $0=du_1+eu_0$ and
$(a+d)u_2 +(b+e-2(-1)^s\gamma d)u_1+(c-2(-1)^s\gamma e)u_0=0$, where
$u_k:=\langle{\bf u},z^k\rangle$ ($k=0,1,\ldots$). Therefore,
\begin{equation}
u_1=-\frac{e}{d}u_0 \; , \quad u_2=-\frac{1}{d+a}\left[-(b+e)\frac{e}{d}+c \right] u_0 \; .\label{Dx-u1}
\end{equation}
On the other hand, since $P_1(z)=z-B_0^{[0]}=z-u_1/u_0$, we also have
\begin{equation}
C_1=\frac{\langle{\bf u},P_1^2\rangle}{u_0}=\frac{u_2u_0-u_1^2}{u_0^2}
=\frac{u_2}{u_0}-\left(\frac{u_1}{u_0}\right)^2 \; .
\label{Dx-u2}
\end{equation}
Substituting $u_1$ and $u_2$ given by (\ref{Dx-u1}) into (\ref{Dx-u2}) yields (\ref{Dx-g1}).
Since equation (\ref{funct-eq-uk}) fulfilled by ${\bf u}^{[n]}$ is of the same type as (\ref{NUL-PearsonMainThm1}) fulfilled by ${\bf u}$, 
with the polynomials $\phi^{[n]}$ and $\psi^{[n]}$ in \eqref{def-phik}-\eqref{def-psik} playing the roles of $\phi$ and $\psi$ in (\ref{NUL-PearsonMainThm1}),  
 $C_{1}^{[n]}$ may be obtained replacing in (\ref{Dx-g1}) $\phi$ and $\psi(z)=dz+e$ by $\phi ^{[n]}$ and $\psi^{[n]}(z)=d_{2n}\big(z-(-1)^s\gamma (1-(-1)^n)\big)+e_n$, respectively. 
 Hence,
\begin{align*}
C_{1}^{[n]}&=-\frac{1}{d_{2n} +a}\phi ^{[n]} \left((-1)^{s}\gamma (1-(-1)^n) -\frac{e_n}{d_{2n}}\right) \\[7pt]
&=-\frac{1}{d_{2n+1}}\phi ^{[n]}\left((-1)^s\gamma (1-(-1)^n) -\frac{e_n}{d_{2n}}\right) .
\end{align*}
Since ${\bf u}^{[n]}$ is regular, then $C_{1}^{[n]}\neq0$, and so 
the second condition in \eqref{le1a} holds.

Conversely, suppose that conditions \eqref{le1a} hold.
Define a sequence of polynomials, $(P_n)_{n\geq 0}$, by \eqref{ttrr-Dx}-\eqref{Cn-Dx}, with $P_{-1}(z):=0$. 
According to the hypothesis \eqref{le1a}, $C_{n+1} \neq 0$ for each $n=0,1,\ldots$. Therefore, Favard's theorem ensures that $(P_n)_{n\geq 0}$ is a monic OPS. 
To prove that ${\bf u}$ is regular we will show that $(P_n)_{n\geq 0}$ is the monic OPS with respect to $\textbf{u}$. 
For that we only need to prove that $$ u_0 \neq 0,\quad\left\langle \textbf{u},P_n \right\rangle =0.$$ 
We start by showing that  $ u_0 \neq 0$. 
Indeed, suppose that  $u_0 =0$. 
Since (\ref{NUL-PearsonMainThm1}) holds, then $\left\langle \mathbf{D}_s (\phi \textbf{u})-\mathbf{S}_s (\psi \textbf{u}), z^n \right\rangle =0$. 
This implies 
\begin{align}
d_n u_{n+1}& +(e_n-2n\gamma (-1)^sd_{n-1})u_n \label{momentseq}\\
&+\big(nc-2n\gamma (-1)^se_{n-1}+n(n-1)(1+4\gamma ^2)d/2   \big)u_{n-1}+\sum_{j=0}^{n-2} a_{n,j} u_j =0 , \nonumber 
\end{align}
$(n=0,1,\ldots)$, for some complex numbers $a_{n,j}$ ($j=0,1,\ldots,n-2$).
Since $u_0=0$ and $d_n\neq0$ for all $n=0,1,\ldots$, from (\ref{momentseq})  we deduce $u_n =0$ for each $n=0,1,\ldots$.
This implies $\textbf{u} = {\bf 0}$, contrary to the hypothesis. Therefore $u_0 \neq 0$.
Next, notice that $P_n (z) =k_n  R_n (z)$ ($n=0,1,\ldots$), where $R_n$ is defined by \eqref{Rn-Prop1} and
$k_n ^{-1}:=(-1)^n \prod_{j=1} ^{n} d_{n+j-2}$. 
Therefore, by the Rodrigues formula \eqref{roformula} given in Theorem \ref{uk-is-classical}, we obtain
\begin{align*}
\langle \textbf{u}, P_n \rangle &= k_n  \langle \textbf{u}, R_n \rangle =k_n  \langle R_n \textbf{u}, 1 \rangle 
= k_n  \big\langle \mathrm{D}_s ^n \textbf{u} ^{[n]}, 1 \big\rangle= (-1)^n k_n  \big\langle \textbf{u}^{[n]}, \mathrm{D}_s ^n ~1 \big\rangle =0 .
\end{align*}
Hence ${\bf u}$ is regular.
The remaining statements in the theorem follow from Theorem \ref{uk-is-classical}.

\section{Proof of Theorem \ref{main-Thm2}}\label{th2}

Let ${\bf u}$ be a regular functional, $\phi$ and $\psi$ two polynomials of degree at most two and one such that ${\bf D}_s(\phi{\bf u})={\bf S}_s(\psi {\bf u})$. Set $\phi (z)=az^2+bz+c$ and $\psi (z)=dz+e$. Since $d\neq 0$, we can always assume that $d=1$.
We distinguish three cases according to the degree of $\phi$.
\\
\\
{\bf Case 1:} If $\deg \phi=0$, that is $\phi(z)=c$ with $c\neq 0$, then from \eqref{Bn-Dx}-\eqref{Cn-Dx}, we obtain
\begin{align*}
B_n= -e  ,\;\quad C_{n+1}=-(n+c)(n+1), 
\end{align*}
with the condition $-c\notin \mathbb{N}_0$. If $(P_n)_{n\geq 0}$ is the monic OPS whose coefficients are the above expressions of $B_n$ and $C_n$, then 
\begin{align*}
P_n(x)=H_n ^{\gamma}\big(z-e;-1,-c\big). 
\end{align*}
\\
{\bf Case 2:} If $\deg \phi=1$, that is $\phi(z)=bz+c$ with $b\neq 0$, then from \eqref{Bn-Dx}-\eqref{Cn-Dx}, we obtain
\begin{align*}
B_n= -2bn-e ,~ C_{n+1}=\big( (b^2-1)n+be-c\big)(n+1), 
\end{align*}
with the condition $(b^2-1)n+be-c \neq 0$, for each $n=0,1,\ldots$. If $(P_n)_{n\geq 0}$ is the monic OPS whose coefficients are the above expressions of $B_n$ and $C_n$, then 
\begin{align*}
P_n(x)=H_n ^{\gamma}\big(z-e;b^2-1,be-c\big). 
\end{align*}
\\
{\bf Case 3:} If $\deg \phi =2$, this means $\phi(z)=az^2+bz+c$, with $a\neq 0$.
Applying \eqref{Bn-Dx}-\eqref{Cn-Dx}, we obtain
\begin{align}
B_n&= -\frac{2abn^2 +2(1-a)bn+(1-2a)e}{(2an-2a+1)(2an+1)},\label{bn-last-proof}\\[7pt]
C_{n+1}&=-\frac{(n+1)(an-a+1)}{(2an-a+1)(2an+1)^2(2an+a+1)}\varphi_4(n),\label{cn-last-proof}
\end{align}
where $$\varphi_4(n)=4a^3n^4 +8a^2n^3 +a(5+4ac-b^2)n^2 +(1-b^2+4ac)n+c-be+ae^2  . $$
Let $\alpha_1$, $\alpha_2$, $\alpha_3$ and $\alpha_4$ be four complex numbers such that
$\varphi_4(n)=4(an+\alpha_1)(an+\alpha_2)(an+\alpha_3)(n+\alpha_4)$. Then we find
\begin{align*}
\alpha_1& =\frac{1}{2}+ \frac{1}{4}\Big((b+1)^2 -4a(e+c)  \Big)^{1/2}+\frac{1}{4}\Big((b-1)^2 -4a(c-e)  \Big)^{1/2},\\[7pt]
\alpha_2 &=\frac{1}{2}+ \frac{1}{4}\Big((b+1)^2 -4a(e+c)  \Big)^{1/2}-\frac{1}{4}\Big((b-1)^2 -4a(c-e)  \Big)^{1/2},\\[7pt]
\alpha_3 &=\frac{1}{2}- \frac{1}{4}\Big((b+1)^2 -4a(e+c)  \Big)^{1/2}+\frac{1}{4}\Big((b-1)^2 -4a(c-e)  \Big)^{1/2},\\[7pt]
\alpha_4 &=\frac{1}{2a}- \frac{1}{4a}\Big((b+1)^2 -4a(e+c)  \Big)^{1/2}-\frac{1}{4a}\Big((b-1)^2 -4a(c-e)  \Big)^{1/2}.
\end{align*}
From the above equations, we rapidly deduce the following identities:
\begin{align*}
&\alpha_4 =(2-\alpha_1-\alpha_2-\alpha_3)/a,~
\alpha_3-a\alpha_4=\alpha_1-\alpha_2,\\[7pt]
&(2\alpha_1-1)(2\alpha_2-1)=b-2ae,\quad (2\alpha_1-1)^2 +(2\alpha_2-1)^2=b^2+1 -4ac.
\end{align*}
We then obtain
\begin{align*}
&\alpha_3=1-\alpha_2,\quad \alpha_4=(1-\alpha_1)/a,\\[7pt]
&e=\big(b-(2\alpha_1-1)(2\alpha_2-1)\big)/(2a)  ,\\[7pt]
&c=\big(b^2+1 -(2\alpha_1-1)^2-(2\alpha_2-1)^2\big)/(4a)   .
\end{align*}
This means that 
\begin{align*}
\phi(z)&=az^2+bz+\big(b^2+1 -r_1 ^2-r_2 ^2\big)/(4a),\\[7pt]
\psi(z)&=x+(b-r_1r_2)/(2a),
\end{align*}
and consequently 
$$\varphi_4(n)=4a^3\Big(n+\frac{1+r_1}{2a}\Big)\Big(n+\frac{1-r_1}{2a}\Big)\Big(n+\frac{1+r_2}{2a}\Big)\Big(n+\frac{1-r_2}{2a}\Big),$$
where
$r_1=2\alpha_1 -1$ and $r_2=2\alpha_2-1$.
Hence \eqref{bn-last-proof}-\eqref{cn-last-proof} become
\begin{align*}
&B_n=  -\frac{b}{2a} -\frac{(2a-1)r_1r_2}{2a(2an-2a+1)(2an+1)},\\[7pt]
&C_{n+1}=\\[7pt]
&-\frac{(n+1)(an-a+1)(2an+r_1+1)(2an-r_1+1)(2an+r_2+1)(2an-r_2+1)}{4a(2an-a+1)(2an+1)^2(2an+a+1)}.
\end{align*}
If $(P_n)_{n\geq 0}$ is the monic OPS whose coefficients are the above expressions of $B_n$ and $C_n$, then 
\begin{align*}
P_n(x)=Q_n ^{\gamma}\Big(z-\frac{b}{2a};\frac{1}{2a},\frac{r_1}{2a},\frac{r_2}{2a}\Big),
\end{align*}
and the proof is completed.

\begin{remark}\quad
Note that the OPS $\big(Q_n ^{\gamma}(\cdot;a,b,c)\big)_{n\geq 0}$ has the following property:
\begin{align*}
Q_n ^{\gamma}(\cdot;a,b,c) =Q_n ^{\gamma}(\cdot;a,c,b)=Q_n ^{\gamma}(\cdot;a,-c,-b).
\end{align*}
Orthogonal polynomial sequences $(H_n ^{\gamma}(\cdot;a,b)_{n\geq 0})$ and $(Q_n ^{\gamma}(\cdot;a,b,c))_{n\geq 0}$ can be obtained from Theorem \ref{main-Thm1} with the pair $(\phi, \psi)=((a+1)^{1/2}z-b,z)$ and $(\phi, \psi  )=(z^2+a^2-b^2-c^2 ,2az-2bc )$, respectively.
\end{remark}

\section*{Acknowledgements }
 This work was supported by the Centre for Mathematics of the University of
Coimbra-UIDB/00324/2020, funded by the Portuguese Government through FCT/
MCTES. KC thanks CMUP, University of Porto, for their support and
hospitality. GF acknowledges the support of the grant entitled ``Geometric approach to ordinary differential equations" funded under New Ideas 3B competition within Priority Research Area III implemented under the ``Excellence Initiative-Research University" (IDUB) Programme (University of Warsaw) (nr 01/IDUB/2019/94). The work of GF is also partially supported by the project PID2021-124472NB-I00 funded by MCIN/AEI/10.13039/501100011033 and by ``ERDF A way of making Europe". This work was carried out during the tenure of an ERCIM Alain Bensoussan Fellowship Programme by the third author and it was concluded during the visit of the first author to the University of Warsaw.

\end{document}